\documentclass{article}
\usepackage{amsmath,amssymb,amsthm}
\usepackage{parskip}
\usepackage{authblk}
\usepackage{tikz}
\usepackage[numbers]{natbib}
\usepackage[margin=3cm]{geometry}
\usepackage{multicol}
\usepackage[bookmarks=false,colorlinks=true,citecolor=blue]{hyperref}

\newtheorem{theorem}{Theorem}{\bfseries}{\itshape}
\newtheorem{definition}{Definition}{\bfseries}{\itshape}
\newtheorem{lemma}{Lemma}{\bfseries}{\itshape}
{\bfseries}{\itshape}
{\bfseries}{\itshape}
\newtheorem{claim}{Claim}{\bfseries}{\itshape}
\newtheorem{subclaim}{Subclaim}
\numberwithin{subclaim}{claim}
\newtheorem{case}{Case}
\newtheorem{subcase}{Case}
\numberwithin{subcase}{case}
\newtheorem{subsubcase}{Case}
\numberwithin{subsubcase}{subcase}

\title{\textbf{Upper Bounds on the Acyclic Chromatic Index of Degenerate Graphs}}
\author{\textbf{Nevil~Anto}}
\author{\textbf{Manu~Basavaraju}}
\author{\textbf{Suresh~Manjanath~Hegde}}
\author{\textbf{Shashanka~Kulamarva}}
\affil{National Institute of Technology Karnataka, Surathkal-575025, India\\ Email: \texttt{nevil.197cs005@nitk.edu.in, manub@nitk.edu.in, smhegde@nitk.edu.in, skulamarva.187ma007@nitk.edu.in}}
\date{}

\begin{document}
	\maketitle
	\begin{abstract}
		\noindent An acyclic edge coloring of a graph is a proper edge coloring without any bichromatic cycles. The \emph{acyclic chromatic index} of a graph $G$ denoted by $a'(G)$, is the minimum $k$ such that $G$ has an acyclic edge coloring with $k$ colors. Fiam\v{c}\'{\i}k \cite{Fiamvcik1978AC} conjectured that $a'(G) \le \Delta+2$ for any graph $G$ with maximum degree $\Delta$. A graph $G$ is said to be \emph{$k$-degenerate} if every subgraph of $G$ has a vertex of degree at most $k$. Basavaraju and Chandran \cite{Basavaraju2010AEC2deg} proved that the conjecture is true for $2$-degenerate graphs. We prove that for a $3$-degenerate graph $G$, $a'(G) \le \Delta+5$, thereby bringing the upper bound closer to the conjectured bound. We also consider $k$-degenerate graphs with $k \ge 4$ and give an upper bound for the acyclic chromatic index of the same.\\
		
		\noindent Keywords: \textit{Acyclic chromatic index; Acyclic edge coloring; $3$-degenerate graphs; $k$-degenerate graphs}\\
		
		\noindent Mathematics Subject Classification: 05C15
	\end{abstract}
	
	\section{Introduction}
	Only finite and simple graphs are considered throughout this paper. Let $G=(V,E)$ be a graph with the vertex set $V$ and the edge set $E$. A \emph{path} in $G$ is a sequence of distinct vertices in $V$ such that there is an edge between every pair of consecutive vertices in the sequence. If we add an edge between the starting vertex and the ending vertex of a path in $G$, then the resulting structure is called a \emph{cycle} in $G$. Let $C$ be the given set of colors. A \emph{proper edge coloring} of $G$, with $C$, is a function $f:E \rightarrow C$ such that $f(e_1) \ne f(e_2)$ whenever $e_1$ and $e_2$ are adjacent to each other. The minimum number of colors required for a proper edge coloring of a given graph $G$ is called the \emph{chromatic index} of $G$ which is denoted by $\chi'(G)$. A proper edge coloring of $G$ is said to be an \emph{acyclic} edge coloring if there are no bichromatic cycles (cycles colored with exactly 2 colors) in $G$. The \emph{acyclic chromatic index} (also called \emph{acyclic edge chromatic number}) of a graph $G$ is the minimum number of colors required for an acyclic edge coloring of $G$ and is denoted by $a'(G)$. \citet{Grunbaum1973ACP} introduced the concept of acyclic coloring. The vertex analog of the acyclic chromatic index can be used to bound other parameters like oriented chromatic number \cite{Kostochka1997AC} and star chromatic number \cite{Fertin2004StarCol} of a graph. Both of these parameters have many practical applications including wavelength routing in optical networks \cite{Amar2001Ntwk}. By Vizing’s theorem \cite{Diestel2017GT}, we have $\Delta \le \chi'(G) \le \Delta+1$ where $\Delta=\Delta(G)$ is the maximum degree of a vertex in the graph $G$. Since acyclic edge coloring is also a proper edge coloring by definition, we have $a'(G) \ge \chi'(G) \ge \Delta$.
	
	It was conjectured by \citet{Fiamvcik1978AC} (and independently by \citet{Alon2001ACI}) that for any graph $G$, $a'(G) \le \Delta+2$. For an arbitrary graph $G$, the best-known upper bound for $a'(G)$ till date is $3.569(\Delta-1)$ given by \citet{Fialho2020AECBound}. They obtained it by using probabilistic techniques. This bound being far from the conjectured bound reflects the difficulty level of the problem.
	
	However, the conjecture has been proved for some special classes of graphs. \citet{Alon2001ACI} proved that there exists a constant $k$ such that $a'(G) \le \Delta+2$ for any graph $G$ with girth at least $k\Delta\log\Delta$. The acyclic chromatic index was exactly determined for some classes of graphs like series-parallel graphs when $\Delta \ne 4$ (\citet{Wang2011ACIK4MinorFree}), outerplanar graphs when $\Delta \ne 4$ (\citet{Hou2013AECOuterPlanarErr}, \citet{Hou2010AECOuterPlanar}), cubic graphs (\citet{Andersen2012AECCubic}), planar graphs with $\Delta \ge 4.2 \times 10 ^{14}$ (\citet{Cranston2019AECPlanar}) and planar graphs with girth at least 5 and $\Delta \ge 19$ (\citet{Basavaraju2011AECPlanar}). In the case of outerplanar graphs and series-parallel graphs, if $\Delta \ge 5$, then $a'(G)=\Delta$ and when $\Delta=3$, they characterize the graphs that require $4$ colors for the acyclic edge coloring.
	
	A graph $G$ is said to be \emph{$k$-degenerate} if every subgraph of $G$ has a vertex of degree at most $k$. It is easy to see that the acyclic edge coloring conjecture is true for $1$-degenerate graphs since a $1$-degenerate graph can be edge colored using exactly $\Delta$ colors. \citet{Basavaraju2010AEC2deg} proved that the conjecture is true for $2$-degenerate graphs by giving a strong upper bound of $\Delta+1$. Particularly, they prove that $a'(G) \le \Delta+1$, for a $2$-degenerate graph $G$.
	
	\citet{Fiedorowicz2011AECEdgeBound} proved that $a'(G) \le (t-1)\Delta + p$ for every graph $G$ which satisfies the condition that $|E(H)| \le t|V(H)|-1$ for every subgraph $H \subseteq G$, where $t \ge 2$ is a given integer, and the constant $p = 2t^3-3t+2$. One can verify that the class of $k$-degenerate graphs is a subclass of the class of graphs defined by \citet{Fiedorowicz2011AECEdgeBound}. Therefore, we can obtain an upper bound on the acyclic chromatic index of a $k$-degenerate graph $G$ as $a'(G) \le (k-1)\Delta + 2k^3-3k+2$ as in \cite{Fiedorowicz2011AECEdgeBound}. In this paper, we study $k$-degenerate graphs and improve this upper bound for the acyclic chromatic index of $k$-degenerate graphs. This upper bound is stated in the form of the following theorem:
	
	\begin{theorem}\label{thm:ACIkdeg}
		Let $G$ be a $k$-degenerate graph with $k \ge 4$ and maximum degree $\Delta$. Then $a'(G) \le \lceil(\frac{k+1}{2})\Delta\rceil + 1$.
	\end{theorem}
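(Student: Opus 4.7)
The plan is to prove the bound by induction on $|V(G)|$. Setting $N := \lceil (k+1)\Delta/2 \rceil + 1$, the base case of graphs with $|V(G)| \le k+1$ is handled directly, as such graphs have at most $\binom{k+1}{2}$ edges and $\Delta \le k$, making $N$ manifestly large enough for a greedy construction. For the inductive step, $k$-degeneracy yields a vertex $u$ of degree $d \le k$ with neighbors $u_1, \dots, u_d$. Since $G - u$ is $k$-degenerate with maximum degree at most $\Delta$, the inductive hypothesis supplies an acyclic edge coloring $f$ of $G - u$ with at most $N$ colors. The task therefore reduces to extending $f$ to the $d$ edges $e_i := uu_i$.

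I would organize the extension as a list-coloring style argument. For each $e_i$ the forbidden colors decompose into three groups: those appearing on edges of $G - u$ incident to $u_i$ (at most $\Delta - 1$); those already used on some $e_j$ with $j \ne i$ (at most $d-1 \le k-1$); and those that would create a bichromatic cycle with some already colored $e_j$. The key structural observation here is that a bichromatic $(c, c_j)$-cycle through $e_i$ and $e_j$ (where $c_j := f(e_j)$) forces the existence of a $c_j$-colored edge $u_i w_j$ in $G - u$, after which the $(c, c_j)$-alternating path starting at $w_j$ is fully determined by $c$. Hence each bad color of the third type can be charged to a specific edge incident to one of at most $d - 1$ critical neighbors $w_j$ of $u_i$ in $G - u$.

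The crucial step, and the one I expect to be the main obstacle, is showing that the three forbidden groups together leave at least one color free for $e_i$. A naive count yields $(d-1)(\Delta - 1) + \Delta + d - 2 \approx k\Delta$ bad colors, overshooting $N \approx (k+1)\Delta/2$ by roughly a factor of two. To bridge this gap I would pursue an amortized analysis: each third-type bad color corresponds to a pair (critical neighbor $w_j$ of $u_i$, color at an edge of $w_j$), and I would exhibit an injection from these pairs into colors already appearing in the first forbidden group, exploiting that the edge $u_i w_j$ of color $c_j$ is itself counted there and that the remaining edges at $w_j$ share a vertex with $u_i$'s neighborhood. Careful bookkeeping of the overlap between bichromatic-bad colors and properness-bad colors, rather than treating them as disjoint, is what is needed to cut the $\Delta$-coefficient from $\approx k$ down to $\approx (k+1)/2$.

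If the direct counting still falls short in corner cases, the fallback is local recoloring: when greedy extension fails at $e_i$, swap colors along one bichromatic component passing through $u_i$, which preserves acyclicity because only a single two-colored subgraph changes, and retry. This may be combined with an ordering heuristic that colors the $e_i$'s so that later-assigned $c_j$'s coincide with colors already present at $u_{i+1}$, collapsing the forbidden sets further. Once a valid color is secured for each $e_i$, the induction closes and the bound $a'(G) \le N$ follows. I expect the bulk of the proof's length to be spent on the tight counting in the inductive extension step; the inductive framework itself is standard for acyclic edge coloring of degenerate graphs.
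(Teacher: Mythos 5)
There is a genuine gap, and you have in fact located it yourself: the entire proof hinges on the counting step that you defer, and the repair you sketch does not work. In your setup the third group of forbidden colors for $e_i$ consists, for each already-colored $e_j$ whose color $c_j$ appears at $u_i$, of the colors on the other edges at the corresponding neighbor $w_j$ of $u_i$; this is up to $(d-1)(\Delta-1)$ colors living at $d-1$ vertices \emph{other than} $u_i$. You propose to inject these into the first forbidden group, i.e.\ the at most $\Delta-1$ colors appearing at $u_i$ itself. A set of size $\Delta-1$ cannot absorb $(k-1)(\Delta-1)$ items, and there is no structural reason for the colors at the $w_j$'s to coincide with those at $u_i$, so the injection as described cannot exist and the count still lands near $k\Delta$ rather than $(k+1)\Delta/2$. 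Your fallback is also unsound as stated: swapping the two colors along a maximal bichromatic component preserves properness but does \emph{not} automatically preserve acyclicity, because an edge recolored from $b$ to $a$ can close a new $(a,c)$-bichromatic cycle with a third color $c$; this is exactly why the paper only ever performs recolorings after verifying, via Lemma~\ref{lem:UniqueMaxBichPath}, that the relevant maximal bichromatic path terminates at a controlled vertex.

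The missing idea is the one the paper builds its proof around. It deletes a single edge $xy$ (with $\deg(x)\le k$) rather than a whole vertex, and observes that an invalid candidate color for $xy$ must appear on an edge incident to the set $S$ of neighbors $u$ of $x$ with $g(xu)\in F_{xy}\cap F_{yx}$ (Claim~\ref{clm:ColorsNotInGTildeValid}). The counting is then used not to exhibit a free color directly but to force structure: $|S|>\frac{k-3}{2}$, hence $|F_{xy}\cup F_{yx}|$ is small; at least two candidate colors appear only \emph{once} among the edges at $N_{G'}(x)$ (Claim~\ref{clm:TildeCAtLeast2}); and these singleton colors are spread over at least two vertices of $S$ (Claim~\ref{clm:TwoVertInSSeeTildeCColors}). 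The proof then finishes with a targeted recoloring of one edge $xx_2$ with a singleton color $\gamma$ seen at a different vertex $x_1$ --- validated by the fact that the $(\alpha,\gamma)$-maximal bichromatic path from $x$ ends at $y$ --- which destroys the unique critical path blocking a second singleton color $\eta$, and $\eta$ is then assigned to $xy$. Without an analogue of this ``two singleton colors at two distinct vertices'' mechanism (or some other device that beats the naive count), your inductive extension step cannot close, so the proposal as written does not constitute a proof.
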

	
	We also come up with an upper bound for the acyclic chromatic index of $3$-degenerate graphs. Even though this does not prove the conjecture, it brings the upper bound close to the conjectured value. In particular, we prove the following theorem:
	
	\begin{theorem}\label{thm:ACI3deg}
		Let $G$ be a $3$-degenerate graph with maximum degree $\Delta$. Then $a'(G) \le \Delta+5$.
	\end{theorem}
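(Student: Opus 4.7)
The plan is to argue by induction on $|E(G)|$ (equivalently, to consider a minimum counterexample $G$). Since $G$ is $3$-degenerate, $G$ contains a vertex $v$ with $\deg_G(v)\le 3$. Let $G'$ be the graph obtained from $G$ by deleting the edges incident with $v$; then $G'$ is $3$-degenerate with $\Delta(G')\le \Delta$, and by the induction hypothesis it admits an acyclic edge coloring $c'$ using at most $\Delta+5$ colors. The remaining work is to extend $c'$ to an acyclic edge coloring of $G$ by assigning colors to the edges $vu_1,\ldots,vu_t$ with $t=\deg_G(v)\le 3$.

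For each new edge $vu_i$ there are two sources of forbidden colors. The \emph{proper} forbidden set consists of the colors already used on edges at $u_i$ (at most $\Delta-1$) together with the colors assigned to the previously processed edges at $v$ (at most $t-1\le 2$). The \emph{acyclicity} forbidden set consists of those colors $\beta$ such that, for some earlier-processed edge $vu_j$ with $c'(vu_j)=\alpha$, the edge $vu_i\mapsto\beta$ closes a bichromatic cycle through $v$; equivalently, there is an $(\alpha,\beta)$-alternating path in $G'$ from $u_i$ to $u_j$ whose endpoint edges have the correct colors. Because $c'$ is acyclic, every $(\alpha,\beta)$-subgraph of $G'$ is a disjoint union of paths, and a short parity argument bounds the number of dangerous $\beta$ per such $\alpha$ by a small constant. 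The cases $t=1$ and $t=2$ are easy: a crude tally shows the total forbidden set has size strictly less than $\Delta+5$, so a valid color always exists.

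The genuine case is $t=3$, where the count is close to $\Delta+5$ and a naive extension can fail. Here I would order the edges $vu_1,vu_2,vu_3$ so that the most constrained vertex is handled last, and exploit overlaps between the proper and acyclicity forbidden sets: any acyclicity-forbidden color at stage $i$ must appear on an edge incident to $u_i$ or $u_j$, which already contributes to the proper forbidden set; this reduces the effective bound. When even this refined count is tight, I would perform a Kempe-type recoloring in $G'$ along a maximal $(\alpha,\beta)$-alternating path starting at one of $u_1,u_2,u_3$, swapping the two colors to free up a color for the stuck edge, and then verify (again using the path structure of $(\alpha,\beta)$-components) that no new bichromatic cycle is introduced by the swap.

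The main obstacle I anticipate is the subcase in which $v$ has three neighbors $u_1,u_2,u_3$ of degree exactly $\Delta$ whose palettes are arranged so that (a) the three palettes nearly exhaust the $\Delta+5$ colors, and (b) for every candidate color $\beta$ and every already-fixed $\alpha=c'(vu_j)$, some $(\alpha,\beta)$-alternating path threatens to produce a bichromatic cycle. Resolving this requires either an additional reducible configuration argument showing that such a $v$ cannot occur in a minimum counterexample (for instance, by finding a $v$ of degree $\le 3$ with a neighbor also of degree $\le 3$, which $3$-degeneracy tends to force in dense regions), or a carefully chosen double Kempe swap whose correctness uses all five of the extra colors. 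The heart of the proof is a disciplined case analysis of these adversarial color arrangements.
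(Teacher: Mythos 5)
There is a genuine gap, and it sits exactly where you flag ``the main obstacle'': your proposal defers the entire difficulty to an unresolved case. Two specific problems. First, your claim that for a fixed $\alpha=c'(vu_j)$ ``a short parity argument bounds the number of dangerous $\beta$ by a small constant'' is false: the dangerous colors $\beta$ are those for which the $(\alpha,\beta)$-maximal bichromatic path starting at $u_j$ reaches $u_i$, and there can be one such $\beta$ for essentially every color appearing at $u_j$, i.e.\ up to $\Delta-1$ of them. These dangerous colors live in $F_{u_j}$, not in $F_{u_i}$, so they do \emph{not} overlap the proper forbidden set for $vu_i$ as you suggest; already for $t=2$ the crude tally can reach roughly $2\Delta$ forbidden colors, so even the case you call ``easy'' needs a recoloring argument. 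Second, your proposed escape routes for the hard case do not work as stated: $3$-degeneracy does \emph{not} force a degree-$\le 3$ vertex with a degree-$\le 3$ neighbor (consider $K_{3,n}$, where every degree-$3$ vertex has all neighbors of degree $n$), and an unspecified ``double Kempe swap'' is not a proof.

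The missing idea is a structural lemma about \emph{which} edge to delete. The paper proves that every $k$-degenerate graph contains an edge $xy$ with $\deg(x)\le k$ such that at most $k$ neighbors of $y$ have degree greater than $k$ (proved by deleting all vertices of degree $\le k$ and applying degeneracy to what remains). For $k=3$ this means all but at most $3$ neighbors of $y$ have degree exactly $3$, so any critical path threatening the edge $xy$ that enters $y$ through such a low-degree neighbor $y'$ is blocked by at most $|F_{yy'}|\le 2$ colors, and moreover the color $g(yy')$ can usually be ``freed'' by recoloring $yy'$. This is what makes the forbidden set count close at $\Delta+5$: the paper deletes only the single edge $xy$, splits on $\deg(x)\in\{1,2,3\}$, and in the hard case $\deg(x)=3$ shows that at most $2$ of the low-degree colors at $y$ fail to be freeable (via a pigeonhole on critical paths against Lemma~\ref{lem:UniqueMaxBichPath}), yielding at least $\Delta-1$ usable colors. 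Without some analogue of this control on the neighborhood of the \emph{other} endpoint, the adversarial configuration you describe (three neighbors of degree $\Delta$ with nearly disjoint palettes) cannot be ruled out, and the proof does not close.
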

	
	\section{Preliminaries}
	Let $G=(V,E)$ be a graph with $n$ vertices and $m$ edges. The \emph{degree} of a vertex in $G$ is the number of edges that are incident to that vertex in $G$. The degree of a vertex $v$ is represented as $deg_G(v)$. The minimum degree and the maximum degree of $G$ are represented as $\delta(G)$ and $\Delta(G)$ respectively. For any vertex $v \in V$, $N_G(v)$ is the set of all vertices in $V$ that are adjacent to the vertex $v$ in $G$. So, $N_G(v)$ represents the set of all neighbors of the vertex $v$ in $G$. Throughout the paper, we ignore $G$ in the above notations whenever the graph $G$ is understood from the context.
	
	Let $X \subseteq E$ and $Y \subseteq V$. The subgraph of $G$ obtained from the vertex set $V$ and the edge set $E \setminus X$ is denoted as $G \setminus X$. Similarly, $G \setminus Y$ is the subgraph of $G$ obtained from the vertex set $V \setminus Y$ and the edge set $E \setminus \{e \in E \mid \exists y \in Y \text{ such that } e \text{ is incident to } y\}$. If either $X$ or $Y$ is a singleton set $\{u\}$, then we just use $G \setminus u$ instead of $G \setminus \{u\}$. The subgraph of $G$ induced by the edges in $X$ is denoted by $G[X]$, i.e., $G[X]=(V_{X},E_{X})$ is a graph where $V_{X}=\{v \in V \mid \exists e \in X \text{ with $e$ incident on $v$}\}$ and $E_{X} = X$. Further notations and definitions can be found in \cite{West2001IGT}. We use the word coloring instead of acyclic edge coloring at some obvious places when there is no ambiguity.
	
	Further, we will mention some definitions and lemmas that are useful for our proof. These were given by \citet{Basavaraju2010AEC2deg}.
	
	\begin{definition}[\cite{Basavaraju2010AEC2deg}]
		Let $H$ be a subgraph of a graph $G$. An edge coloring $f$ of $H$ is called a \textbf{partial edge coloring} of $G$.
	\end{definition}
	
	An edge coloring of $G$ is also a partial edge coloring of $G$ since $G$ is also a subgraph of itself. A partial edge coloring $f$ of $G$ corresponding to a subgraph $H$ is said to be proper (and acyclic) if it is proper (and acyclic) in the subgraph $H$. Note that with respect to a partial coloring $f$, for an edge $e$, $f(e)$ may or may not be defined. So, whenever we use $f(e)$ for some edge $e$, we implicitly assume that $f(e)$ is defined. Let $f$ be a partial edge coloring of the graph $G$. For any vertex $x \in V$, we define $F_x(f) = \{f(xy) \mid y \in N_G(x)\}$. For any edge $uv \in E$, we define $F_{uv}(f) = F_v(f) \setminus \{f(uv)\}$. Whenever the partial coloring $f$ is understood from the context, we use $F_x$ and $F_{uv}$ instead of $F_x(f)$ and $F_{uv}(f)$. One can see that $F_{uv}$ is different from $F_{vu}$.
	
	\begin{definition}[\cite{Basavaraju2010AEC2deg}]
		An $(\alpha,\beta)$-maximal bichromatic path with respect to a partial coloring $f$ of $G$ is a maximal path in $G$ consisting of edges that are colored using the colors $\alpha$ and $\beta$ alternatingly. An $(\alpha,\beta,u,v)$-\textbf{maximal bichromatic path} is an $(\alpha,\beta)$-maximal bichromatic path which starts at the vertex $u$ with an edge colored with $\alpha$ and ends at the vertex $v$.
	\end{definition}
	
	Now, we mention a lemma that follows from the definition of acyclic edge coloring. We assume this lemma implicitly further down the paper. This lemma was mentioned as a fact in \cite{Basavaraju2010AEC2deg}.
	
	\begin{lemma}[\cite{Basavaraju2010AEC2deg}]\label{lem:UniqueMaxBichPath}
		Given a pair of colors $\alpha$ and $\beta$ in a proper coloring $f$ of $G$, there is at most one $(\alpha,\beta)$-maximal bichromatic path containing a particular vertex $v$ in $G$, with respect to $f$.
	\end{lemma}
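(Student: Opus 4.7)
The plan is to exploit the defining property of a proper edge coloring: at any vertex $w$, there is at most one incident edge of color $\alpha$ and at most one incident edge of color $\beta$. This severely constrains how a bichromatic path can pass through $w$, and in fact forces the path to be determined by any single edge together with a direction along it.

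First, I would establish a \emph{local uniqueness} step: given an edge $xy$ on an $(\alpha,\beta)$-maximal bichromatic path, say with $f(xy)=\alpha$, the extension of the path beyond $y$ must start with a $\beta$-colored edge incident to $y$. Since $f$ is proper, there is at most one such edge; either we use it to continue (when it exists and differs from $xy$) or the path terminates at $y$. Iterating this on both sides of $xy$ shows that an $(\alpha,\beta)$-maximal bichromatic path containing $xy$ is completely determined by $xy$ itself.

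Second, I would look at what happens at the vertex $v$. If $v$ is an internal vertex of a maximal bichromatic path $P$, then the two edges of $P$ incident to $v$ are colored $\alpha$ and $\beta$, and by properness they must be the unique $\alpha$-edge and the unique $\beta$-edge at $v$. If $v$ is an endpoint of $P$, then $P$ uses exactly one edge incident to $v$, which again must be the unique edge of its color at $v$; moreover $P$ terminates at $v$ precisely because the edge of the other color is absent at $v$ (otherwise $P$ would extend further, contradicting maximality).

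Finally, suppose for contradiction that two distinct $(\alpha,\beta)$-maximal bichromatic paths $P$ and $Q$ both contain $v$. The second step shows that $P$ and $Q$ must use the same incident edge(s) at $v$, since those edges are forced to be the $\alpha$-edge and/or $\beta$-edge at $v$. Picking any such common edge and applying the local uniqueness step from the first paragraph on both sides forces $P=Q$, a contradiction. The only place requiring genuine care is the endpoint case, where one has to verify that $P$ and $Q$ both terminate at $v$ for the same reason (absence of the other color at $v$) rather than one of them continuing; this follows from maximality. I do not expect any substantive obstacle beyond this small case analysis.
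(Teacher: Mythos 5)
The paper does not actually prove this lemma: it imports it verbatim from Basavaraju and Chandran as a ``fact'' and uses it implicitly, so there is no in-paper argument to compare against. Your degree-based argument is the natural one and is essentially what the literature has in mind: in the subgraph $H$ induced by the $\alpha$- and $\beta$-colored edges every vertex has degree at most $2$ (at most one incident edge of each color, by properness), so each vertex lies in at most one component of $H$, and when that component is a path it is the unique maximal $(\alpha,\beta)$-bichromatic path through $v$.

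There is, however, one concrete step that fails as written. In your local uniqueness step you assert that at $y$ either the unique $\beta$-edge exists and the path continues, or the path terminates; you omit the third possibility that the $\beta$-edge at $y$ exists but leads back to a vertex already on the path. By the degree argument that returning vertex can only be the other endpoint of the path, i.e.\ this happens exactly when the component of $v$ in $H$ is an $(\alpha,\beta)$-bichromatic \emph{cycle} $C$. In that case the lemma, as stated for an arbitrary proper coloring, is genuinely false: for every edge $e$ of $C$ the path $C\setminus e$ is a maximal $(\alpha,\beta)$-bichromatic path containing $v$, so there are several. Your claim that a maximal bichromatic path is ``completely determined by $xy$ itself'' breaks precisely here, because distinct maximal paths in a cycle differ only in where they choose to stop. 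The fix is small but should be stated: either add the hypothesis that no $(\alpha,\beta)$-bichromatic cycle passes through $v$ (which holds everywhere the lemma is invoked in this paper, since the colorings there are acyclic), or phrase the conclusion as ``the component of $v$ in $H$ is a path or a cycle, and in the acyclic case it is a path, which is then the unique maximal bichromatic path through $v$.'' With that caveat your endpoint analysis and the final contradiction go through without further issues.
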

	
	\begin{definition}[\cite{Basavaraju2010AEC2deg}]
		If the vertices $u$ and $v$ are adjacent in the graph $G$, then an $(\alpha,\beta,u,v)$-maximal bichromatic path in $G$, which ends at the vertex $v$ with an edge colored $\alpha$, is said to be an $(\alpha,\beta,uv)$-\textbf{critical path} in $G$.
	\end{definition}
	
	\begin{definition}[\cite{Basavaraju2010AEC2deg}]
		Let $f$ be a partial coloring of $G$. Let $u,a,b \in V$ and $ua,ub \in E$. A \textbf{color exchange} with respect to the edges $ua$ and $ub$ is defined as the process of obtaining a new partial coloring $g$ from the current partial coloring $f$ by exchanging the colors of the edges $ua$ and $ub$. The color exchange defines $g$ as follows. $g(ua)=f(ub)$, $g(ub)=f(ua)$ and for all other edges $e$ in $G$, $g(e)=f(e)$. The color exchange with respect to the edges $ua$ and $ub$ is said to be proper if the coloring obtained after the exchange is proper. The color exchange is said to be \textbf{valid} if the coloring obtained after the exchange is acyclic.
	\end{definition}
	
	A color $\gamma$ is said to be a \emph{candidate} color for an edge $e$ in $G$ with respect to a partial coloring $f$ if none of the edges that are incident on $e$ are colored $\gamma$. A candidate color $\gamma$ is said to be \emph{valid} for an edge $e$ if assigning the color $\gamma$ to $e$ does not result in any new bichromatic cycle in $G$. \citet{Basavaraju2010AEC2deg} mentioned the following lemma as a fact since it is obvious.
	
	\begin{lemma}[\cite{Basavaraju2010AEC2deg}]\label{lem:ColorValidity}
		Let $f$ be a partial coloring of $G$. A candidate color $\gamma$ is not valid for an edge $e=(u,v)$ if and only if there exists a color $\eta \in F_{uv} \cap F_{vu}$ such that there is a $(\eta,\gamma,uv)$-critical path in $G$ with respect to the coloring $f$.
	\end{lemma}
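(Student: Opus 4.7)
The statement is an \emph{if and only if}, so the natural plan is to prove the two directions separately, and each one essentially amounts to converting between a bichromatic cycle through $uv$ and a bichromatic critical path from $u$ to $v$.

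For the easy direction ($\Leftarrow$), assume there is a $(\eta,\gamma,uv)$-critical path $P$. By definition $P$ starts at $u$ with an edge colored $\eta$ and ends at $v$ with an edge colored $\eta$, and its edges alternate between $\eta$ and $\gamma$. Because $\eta \in F_{uv}\cap F_{vu}$, both $u$ and $v$ really do have an incident $\eta$-edge in $P$. Hence the number of edges on $P$ is odd. Closing $P$ with the edge $uv$ (now colored $\gamma$) produces a cycle whose edges alternate between $\eta$ and $\gamma$, i.e.\ a bichromatic cycle, so $\gamma$ is not valid.

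For the nontrivial direction ($\Rightarrow$), suppose $\gamma$ is a candidate but not a valid color for $e=uv$. Then assigning $\gamma$ to $uv$ proper-colors $G$ but yields a new bichromatic cycle $C$. Since $f$ was itself acyclic, $C$ must contain the edge $uv$, so $C$ uses the color $\gamma$ and exactly one other color $\eta\neq \gamma$. Walking $C$ from $u$ to $v$ avoiding the edge $uv$ gives a path whose edges alternate between $\eta$ and $\gamma$; because $uv$ itself is colored $\gamma$, the two edges of $C$ adjacent to $uv$ must be colored $\eta$, which shows $\eta\in F_u=F_{vu}$ and $\eta\in F_v=F_{uv}$, and also forces the path to start and end with $\eta$.

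The only subtle point, and where I expect the main (minor) obstacle, is verifying that this $(\eta,\gamma)$-bichromatic $u$–$v$ path is in fact \emph{maximal} in $G$, so that it qualifies as an $(\eta,\gamma,uv)$-critical path. Maximality could only fail if the path could be extended from $u$ or from $v$ by an edge colored $\gamma$; but since $\gamma$ is a candidate color for $uv$, no edge incident to $u$ or to $v$ is colored $\gamma$ under $f$, ruling out such an extension. Hence the path is an $(\eta,\gamma,uv)$-critical path with $\eta \in F_{uv}\cap F_{vu}$, which completes the proof.
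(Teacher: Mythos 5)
Your proof is correct. The paper states this lemma without proof (it is quoted as an obvious fact from \cite{Basavaraju2010AEC2deg}), and your argument is precisely the standard one the authors have in mind: in one direction you close the critical path with the newly colored edge $uv$ to obtain a bichromatic cycle, and in the other you extract the $u$--$v$ path from the new bichromatic cycle, noting that its end edges at $u$ and $v$ must carry the second color $\eta$ (so $\eta \in F_{uv} \cap F_{vu}$) and that maximality holds exactly because $\gamma$, being a candidate color, appears on no edge incident to $u$ or $v$ --- the one subtle point, which you correctly isolate and settle.
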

	
	Now, we state and prove a lemma on the availability of a special edge in a $k$-degenerate graph. This special edge that we obtain by the lemma is useful in our proof technique.
	
	\begin{lemma}\label{lem:SpecialEdgekdeg}
		If $G$ is a k-degenerate graph, then there exists an edge $xy$ in $G$ such that $deg(x) \le k$ and at most $k$ neighbors of $y$ have their degree strictly greater than $k$.
	\end{lemma}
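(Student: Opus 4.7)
\textbf{Proof plan for Lemma~\ref{lem:SpecialEdgekdeg}.} The plan is to locate a ``high-degree'' vertex $y$ that has few high-degree neighbors by applying the degeneracy property to an appropriate induced subgraph, and then to pick $x$ as any low-degree neighbor of $y$.

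First, partition the vertex set as $V = L \cup H$, where $L = \{v \in V \mid deg_G(v) \le k\}$ and $H = \{v \in V \mid deg_G(v) > k\}$. If $H = \emptyset$, then every vertex of $G$ has degree at most $k$, and for any edge $xy$ of $G$ (assuming $G$ has at least one edge, which is the only nontrivial case) we have $deg(x) \le k$ and all neighbors of $y$ have degree $\le k$, so at most $k$ (indeed zero) of them exceed $k$. We are done in this case.

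Next, suppose $H \ne \emptyset$ and consider the induced subgraph $G[H]$. Since the class of $k$-degenerate graphs is closed under taking subgraphs, $G[H]$ is itself $k$-degenerate, so it contains a vertex $y$ with $deg_{G[H]}(y) \le k$. This vertex $y$ has at most $k$ neighbors lying in $H$; equivalently, at most $k$ neighbors of $y$ in $G$ have degree strictly greater than $k$, which is exactly the second condition of the lemma. Because $y \in H$, we have $deg_G(y) > k \ge deg_{G[H]}(y)$, so $y$ must possess at least one neighbor in $L$. Choose any such neighbor $x \in L \cap N_G(y)$; then $deg_G(x) \le k$ and the edge $xy$ meets both requirements.

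There is no real obstacle here: the only subtlety is noticing that the ``candidate'' vertex $y$ should be sought inside the subgraph $G[H]$ rather than in $G$ itself, so that the count of high-degree neighbors of $y$ is controlled by the degeneracy of $G[H]$. Once this reduction is made, the existence of a low-degree neighbor $x$ of $y$ follows automatically from the fact that $y$'s overall degree in $G$ strictly exceeds its degree inside $G[H]$.
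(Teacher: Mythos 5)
Your proposal is correct and uses the same key idea as the paper's proof: pass to the subgraph induced on the vertices of degree greater than $k$, apply degeneracy there to find $y$ with few high-degree neighbors, and observe that $y$ must have a low-degree neighbor $x$ in $G$. The paper phrases this as a proof by contradiction while you argue directly (and handle the trivial case $H = \emptyset$ explicitly), but the substance is identical.
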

	
	\begin{proof}
		Let $G$ be the given $k$ degenerate graph. By definition of $G$, there exists an edge $xy$ in $G$ such that $deg(x) \le k$. By way of contradiction assume that for every edge $xy$ in $G$ with $deg(x) \le k$, at least $k+1$ neighbors of $y$ have their degree strictly greater than $k$. Now, obtain a graph $G'$ by deleting all the vertices of degree at most $k$ from $G$. Clearly, $G'$ has some edges in it because the edges between the vertex $y$ and any of its higher degree neighbors will still be present in $G'$.
		
		Since $G'$ is a subgraph of $G$, we know that $G'$ is also a $k$ degenerate graph. Hence, there exists an edge $uv$ in $G'$ such that $deg(u) \le k$. If the degree of $u$ was at most $k$ in the graph $G$, then by choice of $G'$, the vertex $u$ should have been deleted while obtaining $G'$ from $G$. Since $u$ is present in $G'$, we are sure that the degree of $u$ was at least $k+1$ in $G$. Hence, there should exist a vertex $w$ which is a neighbor of $u$ in $G$ but $w \notin V(G')$.
		
		Since $w \notin V(G')$, $w$ was deleted while obtaining $G'$ from $G$, implying that $deg_{G}(w) \le k$. In fact, any neighbor of $u$ in $G$ but not present in $G'$ is of degree at most $k$ in $G$. Therefore, the number of neighbors of $u$ that have their degree at least $k+1$ is at most $deg_{G'}(u)$. Since $deg_{G'}(u) \le k$, we have an edge $wu$ in $G$ with $deg_{G}(w) \le k$ and at most $k$ neighbors of $u$ have their degree strictly greater than $k$ in $G$, a contradiction to our initial assumption. Thus we can conclude that the Lemma is valid.
	\end{proof}
	
	\section{Proof of Theorem~\ref{thm:ACIkdeg}}\label{prf:ACIkdegThm}
	\begin{proof}
		Let $G$ be a minimum counterexample to Theorem~\ref{thm:ACIkdeg} with respect to the number of edges. Let $G$ be a $k$-degenerate graph with $n$ vertices, $m$ edges and maximum degree $\Delta$. We also have $k \ge 4$. Let us define the number $p$ as follows:
		\begin{equation*}
			p = \lceil(\frac{k+1}{2})\Delta\rceil+1
		\end{equation*}
		Notice that $p$ is exactly the upper bound in Theorem~\ref{thm:ACIkdeg} that we intend to prove. Let $xy$ be an edge in $G$ such that $deg(x) \le k$. Such an edge $xy$ exists because $G$ is a $k$-degenerate graph. Let $G' = G \setminus xy$, i.e., a graph formed by deleting the edge $xy$ from $G$. Observe that $G'$ is also a $k$-degenerate graph and has less than $m$ edges. Since we did not add any edge or any vertex while obtaining $G'$ from $G$, we have $\Delta(G') \le \Delta(G)$. Therefore, since $G$ is a minimum counterexample, we have an acyclic edge coloring $g$ of $G'$ with $p$ colors. Let $C$ be the set of colors used in the coloring $g$, i.e., $C = \{1,2,\dots,p\}$.
		
		Now, we try to extend $g$ to an acyclic edge coloring $f$ of $G$ by assigning a color to the edge $xy$ from $C$, thereby arriving at a contradiction to the fact that $G$ is a minimum counterexample. Now, we define a set of vertices $S$ as follows:
		\begin{equation*}
			S = \{u \in N(x) \setminus y \mid \exists v \in N(y) \text{ such that } g(xu) = g(yv)\}
		\end{equation*}
		Let $E^*$ be the set of all edges in $G$ which are incident on at least one vertex in $S \cup \{x,y\}$. Observe that all the edges in $E^*$ except $xy$ are colored in $g$. Let $g(E^*)$ be the set of all colors seen on the edges in $E^*$ in the coloring $g$, excluding the repetitions. Now, we make the following claim about the validity of the colors which are not in $g(E^*)$.
		
		\begin{claim}\label{clm:ColorsNotInGTildeValid}
			Any color that is not in $g(E^*)$, is a valid color for the edge $xy$ in $G$.
		\end{claim}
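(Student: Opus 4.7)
The plan is a direct argument by contradiction based on Lemma~\ref{lem:ColorValidity}. First I would observe that any $\gamma \notin g(E^*)$ is automatically a candidate color for $xy$: since $x, y \in S \cup \{x, y\}$, every edge incident to $x$ or $y$ lies in $E^*$, so none of them is colored $\gamma$. Thus the only thing left to verify is that coloring $xy$ with $\gamma$ creates no bichromatic cycle.

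Now suppose, for contradiction, that $\gamma$ is not valid. By Lemma~\ref{lem:ColorValidity} there is a color $\eta \in F_{xy} \cap F_{yx}$ and an $(\eta, \gamma, xy)$-critical path $P$ in $G$ with respect to $g$. Unfolding the definition, $P$ starts at $x$ with an edge colored $\eta$ and ends at $y$ with an edge colored $\eta$. Since $xy$ itself is uncolored in $g$, the first edge of $P$ is some $xu$ with $u \in N(x) \setminus \{y\}$ and $g(xu) = \eta$. Moreover, $\eta \in F_{xy}$ means $\eta$ appears on some edge incident to $y$, say $g(yv) = \eta$ with $v \in N(y) \setminus \{x\}$. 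The equality $g(xu) = g(yv) = \eta$ is exactly the defining condition that places $u$ in $S$.

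The crux is then to examine the second edge of $P$. Since $xy$ is uncolored, a length-one critical path is impossible, so $P$ has length at least two and its second edge is incident to $u$ and colored $\gamma$. But $u \in S$ forces every edge incident to $u$ to lie in $E^*$, so every color appearing at $u$ lies in $g(E^*)$. In particular $\gamma \in g(E^*)$, contradicting the hypothesis, and the claim follows.

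I do not anticipate any real obstacle here; the claim is essentially a bookkeeping exercise on the structure forced by a bichromatic critical path through $xy$, and the definition of $S$ was clearly tailored to capture exactly this obstruction. The only minor point of care is checking that $P$ has length at least two so that the second edge is well defined, which is automatic because $xy$ is uncolored in $g$.
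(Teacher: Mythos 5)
Your proof is correct and follows essentially the same route as the paper: identify the first edge $xu$ of the critical path, use the matching color $\eta$ at $y$ to place $u$ in $S$, and derive the contradiction from the second edge of the path being colored $\gamma$ yet lying in $E^*$. The only cosmetic difference is that you invoke Lemma~\ref{lem:ColorValidity} explicitly and justify the length-at-least-two step, which the paper leaves implicit.
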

		
		\begin{proof}
			Let $\alpha$ be a color that is not in $g(E^*)$. Then clearly $\alpha \notin F_{xy}$ and $\alpha \notin F_{yx}$ by choice of $E^*$ and $\alpha$. Hence, $\alpha$ is a candidate color for the edge $xy$ in $G$. By way of contradiction, assume that the candidate color $\alpha$ is not a valid color for the edge $xy$ in $G$. This means that there exists a color $\beta$ such that a $(\beta,\alpha,xy)$-critical path exists in $G'$. Since the $(\beta,\alpha,xy)$-critical path should be colored with the colors $\beta$ and $\alpha$ only, there should exist three vertices $x'$ and $x''$ and $y'$ in $G'$ distinct from $x$ and $y$ such that $g(xx') = \beta$, $g(x'x'') = \alpha$ and $g(yy') = \beta$. Observe that $x' \in N(x)$. But we also have $g(xx') = g(yy') = \beta$ implying that $x' \in S$.
			
			Therefore, $x'x'' \in E^*$. Since $g(x'x'') = \alpha$, this is a contradiction to our initial assumption that $\alpha$ was a color that is not in $g(E^*)$. Hence, we can conclude that our assumption was wrong and the claim holds, as desired.
		\end{proof}
		
		\begin{figure}[h]
			\centering
			\begin{tikzpicture}
				\begin{scope}[every node/.style={circle,draw,inner sep=0pt, minimum size=5ex}]
					\node (y) at (1,0) {$y$};
					\node (x) at (4,0) {$x$};
					\node (y1) at (-1,1) {$y_1$};
					\node (y2) at (-1,-1) {$y_2$};
					\node (x2) at (6,1) {$x_2$};
					\node (x3) at (4.25,2.25) {$x_1$};
				\end{scope}
				\node (x1) at (5,2) {};
				\node (x4) at (6.5,0) {};
				\node (x5) at (6.25,-0.5) {};
				\node (x6) at (5,-2) {};
				\node (x8) at (5.125,-0.25) {};
				\node (x9) at (5.5,0) {};
				\node (x10) at (4.7,-1.25) {};
				\node (x11) at (4.5,1) {};
				\node (x12) at (5,0.5) {};
				\node (x7) at (4.5,-1) {};
				\node (y3) at (0,0.5) {};
				\node (y4) at (0,-0.5) {};
				\draw (x) to (y);
				\draw (y) to (y1);
				\draw (y) to (y2);
				\draw (x) to (x3);
				\draw (x) to (x1);
				\draw (x) to (x2);
				\draw (x) to (x4);
				\draw (x) to (x6);
				\draw (x) to (x5);
				\draw [dashed] (x8) to (x7);
				\draw [dashed, bend left] (x3) to node[right]{$S$} (x2);
				\draw [dashed, bend left] (x4) to node[right]{$N_{G'}(x) \setminus S$} (x6);
				\draw [dashed] (x11) to (x12);
				\draw [dashed] (y3) to (y4);
			\end{tikzpicture}
			\caption{Neighborhood of the edge $xy$ in $G$}
			\label{fig:k-deg}
		\end{figure}
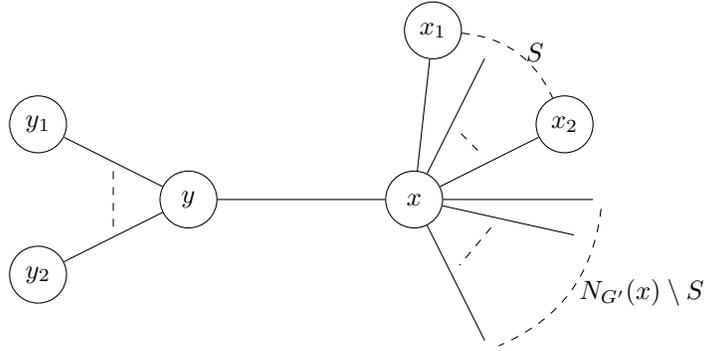
		
		Let $C'$ be the set of candidate colors for the edge $xy$ and let $C''$ be the set of colors in $F_{xy} \cup F_{yx}$. Observe that $C = C' \cup C''$. Note that any color in $C'$ is not valid for the edge $xy$ since $G$ is a minimum counterexample. Hence, together with Claim~\ref{clm:ColorsNotInGTildeValid}, we have that every color in $C$ is present in $g(E^*)$, i.e., $|g(E^*)| = |C' \cup C''| = p$. This also implies that every color in $C'$ is present at some vertex in $S$. Let the set of colors in $C'$ which appear only once on the edges which are incident on some vertex in $N_{G'}(x)$ be denoted by $C^*$. Notice that $|S| = |F_{xy} \cap F_{yx}|$. Now, we claim that the size of the set $S$ has a lower bound as follows:
		
		\begin{claim}\label{clm:SHasAtLeastKByTwo}
			$|S| = |F_{xy} \cap F_{yx}| > \frac{k-3}{2}$.
		\end{claim}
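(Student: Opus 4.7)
The plan is a double-counting argument on the set of candidate colors $C'$, bounding $|C'|$ both from below via inclusion--exclusion on $F_{xy} \cup F_{yx}$ and from above via the vertices of $S$.

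First I would record the bijection $|S| = |F_{xy} \cap F_{yx}|$ explicitly: each $u \in S$ yields a color $g(xu) \in F_{xy} \cap F_{yx}$, and conversely any color in $F_{xy} \cap F_{yx}$ determines a unique such $u$ because $g$ is proper. Since $|F_{xy}| = deg(y) - 1 \le \Delta - 1$ and $|F_{yx}| = deg(x) - 1 \le k - 1$, inclusion--exclusion gives $|F_{xy} \cup F_{yx}| \le \Delta + k - 2 - |S|$, whence
\[
|C'| \;=\; p - |F_{xy} \cup F_{yx}| \;\ge\; p - \Delta - k + 2 + |S|.
\]

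For the matching upper bound I would use the paragraph preceding the claim: every color of $C$, and in particular every color of $C'$, must appear in $g(E^*)$. Since a color of $C'$ is by definition absent from $F_{xy} \cup F_{yx}$, it is missing from every edge at $x$ and every edge at $y$, so it must appear on an edge of $E^*$ incident to some vertex $u \in S$ (with the other endpoint outside $\{x,y\}$). For each such $u$ the edge $xu$ is coloured by a color of $F_{yx}$ (hence not in $C'$), so $u$ can witness at most $deg(u) - 1 \le \Delta - 1$ colors of $C'$. Summing over $u \in S$ yields $|C'| \le |S|(\Delta - 1)$.

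Combining the two bounds gives $|S|(\Delta - 2) \ge p - \Delta - k + 2$. Substituting $p \ge \tfrac{k+1}{2}\Delta + 1$ and rearranging reduces the required inequality $|S| > \tfrac{k-3}{2}$ to $2\Delta > 0$, which is immediate; note that this last step is where the slackness in going from $(k-1)\Delta$ on the left to $(k-3)\Delta$ on the right delivers a strict inequality. The only technical obstacle is ensuring $\Delta > 2$ so that dividing by $\Delta - 2$ is legitimate, but this is harmless: if $\Delta \le 2$ then $p \ge k + 2 \ge 6$ already exceeds the trivial upper bound on $a'(G)$ for such graphs, so $G$ could not be a counterexample and this case is vacuous.
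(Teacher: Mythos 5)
Your proposal is correct and follows essentially the same double-counting argument as the paper: both bound $|C''|=|F_{xy}\cup F_{yx}|\le \Delta+k-2-|S|$ by inclusion--exclusion, bound $|C'|\le |S|(\Delta-1)$ via the fact that every candidate color must appear at a vertex of $S$, and compare the sum against $p$. The only cosmetic differences are that you argue directly rather than by contradiction, your final rearrangement avoids the paper's use of the auxiliary inequality $k\le\Delta$, and you explicitly dispose of the $\Delta\le 2$ case; none of this changes the substance.
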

		
		\begin{proof}
			By way of contradiction, assume that $|S| = |F_{xy} \cap F_{yx}| = q \le \frac{k-3}{2}$. Since $deg(x) \le k$ and there are $q$ colors common in $F_{xy}$ and $F_{yx}$, we have:
			\begin{equation*}
				|C''| = |F_{xy} \cup F_{yx}| \le \Delta - 1 + k - q - 1 = \Delta+k-q-2
			\end{equation*}
			The remaining colors in $g(E^*)$ are seen at the vertices in $S$. Since $|S| = q$, we have $|C'| \le q(\Delta-1)$. Thus we have the following inequality:
			\begin{align*}
				|C' \cup C''| & \le (q(\Delta-1)) + (\Delta+k-q-2) \\
				& \le (q+1)\Delta + k - 2q - 2
			\end{align*}
			Since $k \le \Delta$, we have $k - 2q - 2 \le \Delta - 2$. Further, since $q \le \frac{k-3}{2}$, we have $q+1 \le \frac{k-1}{2}$. Therefore, the inequality becomes as follows:
			\begin{align*}
				|C' \cup C''| & \le (q+1)\Delta + k - 2q - 2 \\
				& \le (\frac{k-1}{2})\Delta + \Delta - 2 \\
				& \le (\frac{k+1}{2})\Delta - 2 \\
				& < p
			\end{align*}
			Observe that we have obtained an inequality $|g(E^*)| = |C' \cup C''| < p$, which is a contradiction to our assumption that $|g(E^*)| = p$. Therefore, the claim holds.
		\end{proof}
		
		Now, since $|S| > \frac{k-3}{2}$ by Claim~\ref{clm:SHasAtLeastKByTwo}, we have $|N_{G'}(x) \setminus S| \le k-1-(\frac{k-3}{2}) = \frac{k+1}{2}$. Hence, we have:
		\begin{equation*}
			|C''| = |F_{xy} \cup F_{yx}| \le \Delta - 1 + \frac{k+1}{2} = \Delta + \frac{k}{2} - \frac{1}{2}
		\end{equation*}
		Further, we claim that the cardinality of the set $C^*$ has a lower bound as follows:
		
		\begin{claim}\label{clm:TildeCAtLeast2}
			$|C^*| \ge 2$.
		\end{claim}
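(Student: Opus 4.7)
My plan is to derive a contradiction by double counting color-appearances on edges incident to $N_{G'}(x)$. For each $\alpha \in C'$, let $m_\alpha$ denote the number of edges in $G'$ incident to some vertex of $N_{G'}(x)$ and colored $\alpha$. The discussion immediately preceding Claim~\ref{clm:TildeCAtLeast2} already guarantees that $m_\alpha \ge 1$ for every $\alpha \in C'$, since every color in $C'$ appears at some vertex of $S \subseteq N_{G'}(x)$. Writing $C^* = \{\alpha \in C' : m_\alpha = 1\}$, the assumption $|C^*| \le 1$ (for contradiction) forces all but at most one of the colors in $C'$ to satisfy $m_\alpha \ge 2$, so that $\sum_{\alpha \in C'} m_\alpha \ge 2|C'|-1$.

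Next I would bound the two sides of this inequality separately. Because $C = C' \sqcup C''$, we have $|C'| = p - |C''|$, and the bound $|C''| \le \Delta + \frac{k-1}{2}$ established immediately after Claim~\ref{clm:SHasAtLeastKByTwo}, together with $p \ge \frac{k+1}{2}\Delta + 1$, yields after a short calculation
\begin{equation*}
|C'| \;\ge\; \frac{k-1}{2}(\Delta - 1) + 1.
\end{equation*}
For the upper bound on the sum, I observe that colors of $C'$ cannot appear on any edge $xu$ with $u \in N_{G'}(x)$, since such edges carry colors from $F_x \subseteq C''$. Hence
\begin{equation*}
\sum_{\alpha \in C'} m_\alpha \;\le\; \sum_{u \in N_{G'}(x)}(\deg_{G'}(u) - 1) \;\le\; (k-1)(\Delta - 1),
\end{equation*}
using $|N_{G'}(x)| \le k - 1$ and $\deg_{G'}(u) \le \Delta$.

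Combining the two bounds under the assumption $|C^*| \le 1$ gives $|C'| \le \frac{(k-1)(\Delta-1) + 1}{2}$, which is strictly smaller than the lower bound $\frac{k-1}{2}(\Delta-1) + 1$, contradicting the previous paragraph and establishing $|C^*| \ge 2$. The main subtlety I foresee is purely in the counting inequality in the second display: I need each edge to be counted at most once in the sum $\sum_{\alpha \in C'} m_\alpha$, and I expect that $\sum_{u}(\deg_{G'}(u)-1)$ may over-count edges lying entirely inside $N_{G'}(x)$; but any such over-counting only loosens the upper bound, leaving the contradiction intact.
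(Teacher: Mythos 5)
Your proposal is correct and is essentially the paper's own argument: both proofs bound $|C'|$ by double-counting appearances of candidate colors on the at most $(k-1)(\Delta-1)$ edge-slots at the vertices of $N_{G'}(x)$, using that under the assumption $|C^*| \le 1$ all but at most one such color appears at least twice, and both combine this with the bound $|C''| \le \Delta + \frac{k-1}{2}$ to contradict $|C' \cup C''| = p$. The only difference is presentational: you compare a lower bound on $|C'|$ (from $|C'| = p - |C''|$) against the counting upper bound, while the paper adds the two upper bounds and checks that $|C' \cup C''| < p$.
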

		
		\begin{proof}
			By way of contradiction, assume that $|C^*| = q \le 1$. Except for the $q$ colors in $C^*$, any of the remaining colors in $C'$ appear at least twice at the edges incident on the vertices in $N_{G'}(x)$. Therefore, the number of candidate colors that are not valid for the edge $xy$ (which is exactly given by $|C'|$) is at most
			\begin{equation*}
				\frac{(k-1)(\Delta-1)-q}{2} + q
			\end{equation*}
			Thus we have the following inequality:
			\begin{align*}
				|C' \cup C''| & \le (\frac{(k-1)(\Delta-1)-q}{2} + q) +  (\Delta + \frac{k}{2} - \frac{1}{2})\\
				& \le (\frac{k+1}{2})\Delta + \frac{q}{2} \\
				& \le (\frac{k+1}{2})\Delta + \frac{1}{2} \\
				& < p
			\end{align*}
			Since $|g(E^*)| = |C' \cup C''|$, we have $|g(E^*)| < p$, a contradiction to our initial assumption that $|g(E^*)| = p$. Therefore, our assumption that $|C^*| \le 1$ is not valid and the claim holds.
		\end{proof}
		
		Now, we make the following claim about the number of vertices in $S$ whose edges see the colors in $C^*$.
		
		\begin{claim}\label{clm:TwoVertInSSeeTildeCColors}
			There exist at least two vertices in $S$ whose edges see the colors in $C^*$.
		\end{claim}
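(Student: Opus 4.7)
I would argue by contradiction, assuming that at most one vertex of $S$ is incident to an edge coloured from $C^*$. At least one such vertex does exist: every colour $\alpha\in C^*\subseteq C'$ must appear in $g(E^*)$ (otherwise Claim~\ref{clm:ColorsNotInGTildeValid} would render $\alpha$ valid for $xy$), yet $\alpha\notin C''=F_x\cup F_y$, so $\alpha$ cannot appear at $x$ or $y$ and must therefore appear at some vertex of $S$. The hypothesis then produces a unique such vertex $u^*\in S$, and each of the $q=|C^*|\ge 2$ colours of $C^*$ sits on exactly one edge incident to $u^*$ (by the uniqueness clause defining $C^*$).

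Next I would analyse the blocking critical paths provided by Lemma~\ref{lem:ColorValidity}. For each $\alpha\in C^*$, since $\alpha$ is a candidate but invalid colour for $xy$, there exist $\eta\in F_{xy}\cap F_{yx}$ and an $(\eta,\alpha,xy)$-critical path $P_\alpha$ in $G'$. Tracing $P_\alpha$ from $x$: its first edge $xu$ has colour $\eta$ (so $u\in S$) and its second edge, incident to $u$, has colour $\alpha$; since the only $N_{G'}(x)$-incident edge coloured $\alpha$ is at $u^*$, this forces $u=u^*$ and $\eta = g(xu^*) =: \gamma$. Tracing $P_\alpha$ from $y$: its last edge has colour $\gamma$, so it must be the unique edge $v^*y$ where $v^*\in N(y)$ satisfies $g(yv^*)=\gamma$ (well-defined because $u^*\in S$). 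The second-to-last edge of $P_\alpha$ is incident to $v^*$ and coloured $\alpha$. Varying $\alpha$ over $C^*$ yields $q\ge 2$ distinct incident edges of $v^*$ whose colours lie in $C^*$.

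To conclude, observe that $v^*\ne u^*$ (otherwise $xu^*$ and $yu^*$ would both be coloured $\gamma$, contradicting properness at $u^*$). If $v^*\in N_{G'}(x)$, then the $q\ge 2$ $C^*$-coloured edges at $v^*$ are $N_{G'}(x)$-incident, and at most one of them can be the edge $v^*u^*$, so some $\alpha\in C^*$ is carried by an edge $v^*z$ with $z\ne u^*$; this is a second $N_{G'}(x)$-incident occurrence of $\alpha$ and contradicts the definition of $C^*$. The main obstacle is the remaining case $v^*\notin N_{G'}(x)$, where the extra $C^*$-incidences at $v^*$ lie outside the $N_{G'}(x)$-incident edges and do not directly violate the uniqueness defining $C^*$. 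I would dispose of this case by a proper colour exchange between $xu^*$ and $u^*w_\alpha$ for some $\alpha\in C^*$, verifying acyclicity via Lemma~\ref{lem:UniqueMaxBichPath}, and then arguing that in the resulting coloring some colour $\alpha'\in C^*\setminus\{\alpha\}$ becomes a valid colour for $xy$: its only possible blocking critical path would start at $xu^*$, but after the exchange $g(xu^*)=\alpha\notin F_y$, so no such path can exist, contradicting the minimality of $G$.
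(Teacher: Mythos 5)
Your setup is correct and careful up to the point where it matters: the identification of the unique vertex $u^*$, the forced common colour $\gamma=g(xu^*)$ on the first edge of every blocking path, the vertex $v^*$ with $g(yv^*)=\gamma$ at the other end, and the $q\ge 2$ distinct $C^*$-coloured edges incident to $v^*$ are all sound, and the case $v^*\in N_{G'}(x)$ is correctly dispatched. The gap is exactly where you flag it, in the case $v^*\notin N_{G'}(x)$, and the proposed repair does not work: the colour exchange between $xu^*$ and $u^*w_\alpha$ is \emph{never} proper. The edge $u^*w_\alpha$ is the second edge of the $(\gamma,\alpha,xy)$-critical path $P_\alpha$, so $w_\alpha$ is the third vertex of that path, and the third edge of $P_\alpha$, incident to $w_\alpha$, is coloured $\gamma$; this third edge always exists because a critical path must end at $y$ with an edge coloured $\gamma$ and hence has odd length at least $3$. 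Recolouring $u^*w_\alpha$ with $\gamma$ therefore puts two $\gamma$-coloured edges at $w_\alpha$, for \emph{every} choice of $\alpha\in C^*$, so there is no "some $\alpha$" for which the exchange is proper, and the subsequent validity argument never gets off the ground.

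The paper avoids this trap by never touching the edges at $u^*$ other than $xu^*$ itself. After a counting subclaim showing $C'\setminus F_{xu^*}\neq\emptyset$, it either finds a colour $\gamma'\in C'\setminus F_{xu^*}$ occurring at most twice on edges incident to $N_{G'}(x)\setminus\{u^*\}$ — in which case it recolours the two edges $xx_1,xx_2$ at $x$ (not at $u^*$) with two colours of $C^*$, which is proper precisely because colours of $C^*$ occur nowhere on $N_{G'}(x)\setminus\{u^*\}$ and acyclic by Lemma~\ref{lem:UniqueMaxBichPath}, thereby making $\gamma'$ valid for $xy$ — or it derives a numerical contradiction with $|C'\cup C''|=p$ from every such colour occurring at least three times. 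If you want to salvage your route, the recolouring must be performed on edges incident to $x$, where the candidate colours of $C^*$ are guaranteed to be absent, rather than on $u^*w_\alpha$.
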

		
		\begin{proof}
			Every color in $C^*$ is present on some edge incident to a vertex in $S$, because $C^* \subseteq C'$ and every color in $C'$ is present on some edge incident to a vertex in $S$. By way of contradiction, assume that every color in $C^*$ is present on the edges incident to a single vertex in $S$. Let $x'$ be the vertex in $S$ such that every color in $C^*$ is in $F_{xx'}$ and let $g(xx') = \zeta$. Let $\alpha$ and $\beta$ be two colors in $C^*$. By Claim~\ref{clm:TildeCAtLeast2}, $\alpha$ and $\beta$ exist. Since no color in $C^*$ is valid for the edge $xy$ in $G$, for each color $\gamma \in C^*$, there exists a $(\zeta,\gamma,xy)$-critical path in $G$.
			
			\begin{subclaim}\label{subclm:C'MinusFxxTildeIsNonEmpty}
				$C' \setminus F_{xx'} \neq \emptyset$.
			\end{subclaim}
			
			\begin{proof}
				By way of contradiction, assume that $C' \setminus F_{xx'} = \emptyset$. This means that every candidate color for the edge $xy$ is in $F_{xx'}$, implying that $|C'| \le |F_{xx'}| \le \Delta-1$. Further, we also have $|C''| \le \Delta + \frac{k}{2} - \frac{1}{2}$. Therefore, we have the following inequality:
				\begin{align*}
					|C' \cup C''| & \le (\Delta-1) +  (\Delta + \frac{k}{2} - \frac{1}{2})\\
					& \le 2\Delta + \frac{k}{2} - \frac{3}{2}
				\end{align*}
				Observe that since $4 \le k \le \Delta$, we have $p = \lceil(\frac{k+1}{2})\Delta\rceil+1 \ge \lceil2.5\Delta\rceil+1$. If $k=4$, then we have:
				\begin{equation*}
					|C' \cup C''| \le 2\Delta + \frac{4}{2} - \frac{3}{2} = 2\Delta+0.5 < p
				\end{equation*}
				Otherwise, if $k \ge 5$, then we have $p = \lceil(\frac{k+1}{2})\Delta\rceil+1 \ge \lceil3\Delta\rceil+1$, which in turn implies that:
				\begin{equation*}
					|C' \cup C''| \le 2\Delta + \frac{k}{2} - \frac{3}{2} < p
				\end{equation*}
				Therefore, in any case, for $k \ge 4$, we have $|C' \cup C''| < p$, a contradiction to the fact that $|C| = |C' \cup C''| = p$. Hence, our assumption that $C' \setminus F_{xx'} = \emptyset$ was wrong and the subclaim holds.
			\end{proof}
			
			Now, assume that there exists a color $\gamma$ in $C' \setminus F_{xx'}$ that repeats at most twice on the edges incident on $N_{G'}(x) \setminus x'$. Since every color in $C^*$ is in $F_{xx'}$, we have $\gamma \notin C^*$, implying that $\gamma$ repeats exactly twice on the edges incident on $N_{G'}(x) \setminus x'$. Let $x_1$ and $x_2$ be vertices in $N_{G'}(x) \setminus x'$ such that $\gamma \in F_{xx_1}$ and $\gamma \in F_{xx_2}$. Now, recolor the edges $xx_1$ and $xx_2$ with $\alpha$ and $\beta$ respectively. Observe that for any color $\eta$ in $C^*$, $\eta \notin F_{xv}$ for every $v \in N_{G'}(x) \setminus x'$. Hence, this is particularly true for the colors $\alpha$ and $\beta$ in $C^*$. Therefore, the recoloring is proper. Since for every color $\eta \in C^*$, the $(\zeta,\eta)$-bichromatic path in $G$ starting from $x$ ends at $y$, by Lemma~\ref{lem:UniqueMaxBichPath}, there is no new bichromatic cycle formed by this recoloring. Hence, the recoloring is valid. Now, since $\gamma \in (C' \setminus F_{xx'})$, $\gamma \notin \{\alpha,\beta\}$, which implies that $\gamma$ is a candidate color for the edge $xy$. Further, for any vertex $v \in N_{G'}(x) \setminus \{x_1,x_2\}$, we have $\gamma \notin F_{xv}$. Therefore, $\gamma$ is also valid for the edge $xy$, a contradiction since $G$ is a minimum counterexample.
			
			Hence, we can safely assume that there does not exist a color in $C' \setminus F_{xx'}$ that repeats at most twice on the edges incident on $N_{G'}(x) \setminus x'$. By Subclaim~\ref{subclm:C'MinusFxxTildeIsNonEmpty}, we have $C' \setminus F_{xx'} \neq \emptyset$. This means that every color in $C' \setminus F_{xx'}$ repeats at least three times on the edges incident on $N_{G'}(x) \setminus x'$. Therefore, we can infer the following:
			\begin{align*}
				|C' \setminus F_{xx'}| & \le \frac{(k-2)(\Delta-1)}{3} \\
				& \le (\frac{k-2}{3})\Delta - \frac{k}{3} + \frac{2}{3}
			\end{align*}
			Recall that we already have $|C''| \le \Delta + \frac{k}{2} - \frac{1}{2}$. Therefore, collectively we have the following inequality:
			\begin{align*}
				|C' \cup C''| & \le |C' \setminus F_{xx'}| + |F_{xx'}| + |C''| \\
				& \le ((\frac{k-2}{3})\Delta - \frac{k}{3} + \frac{2}{3}) + (\Delta-1) + (\Delta + \frac{k}{2} - \frac{1}{2}) \\
				& \le (\frac{k+4}{3})\Delta + \frac{k}{6} - \frac{5}{6}
			\end{align*}
			Notice that for some color in $C' \setminus F_{xx'}$ to repeat at least three times on the edges incident on $N_{G'}(x) \setminus x'$, it is necessary that $|N_{G'}(x) \setminus x'| \ge 3$. This implies that $k \ge 5$. Recall that we have $p \ge \lceil3\Delta\rceil+1$ whenever $k \ge 5$. If $k=5$, then we have:
			\begin{equation*}
				|C' \cup C''| \le (\frac{k+4}{3})\Delta + \frac{k}{6} - \frac{5}{6} = (\frac{5+4}{3})\Delta + \frac{5}{6} - \frac{5}{6} = 3\Delta < p
			\end{equation*}
			But this is a contradiction to the fact that $|C' \cup C''| = p$.
			
			Otherwise, let $k \ge 6$. Now, since $k \le \Delta$, we have $\frac{k}{6} \le \frac{\Delta}{6}$. Hence, we have the following:
			\begin{align*}
				|C' \cup C''| & \le (\frac{k+4}{3})\Delta + \frac{\Delta}{6} - \frac{5}{6} \\
				& \le (\frac{2k+9}{6})\Delta - \frac{5}{6}
			\end{align*}
			Notice that if $k \ge 6$, then $\frac{2k+9}{6} \le \frac{k+1}{2}$. Since we already have $k \ge 6$, we can infer that $|C' \cup C''| \le  (\frac{k+1}{2})\Delta - \frac{5}{6} < p$, a contradiction to the fact that $|C' \cup C''| = p$.
			
			Therefore, in any case, we arrive at a contradiction. Hence, our assumption that every color in $C^*$ is present on the edges incident to a single vertex in $S$, was wrong and the claim holds.
		\end{proof}
		
		Recall that by Claim~\ref{clm:TildeCAtLeast2}, we have that $|C^*| \ge 2$. Further, by Claim~\ref{clm:TwoVertInSSeeTildeCColors}, we are sure that there exist at least two vertices in $S$ whose edges see the colors in $C^*$. Let $x_1$ and $x_2$ be the vertices in $S$ such that there exist two colors $\gamma$ and $\eta$ in $C^*$ satisfying $\gamma \in F_{xx_1}$ and $\eta \in F_{xx_2}$. Let $g(xx_1) = \alpha$ and $g(xx_2) = \beta$.
		
		Since $\gamma$ and $\eta$ are not valid for the edge $xy$ in $G$, there exists an $(\alpha,\gamma,xy)$-critical path in $G$ and also a $(\beta,\eta,xy)$-critical path in $G$. Now, we recolor the edge $xx_2$ to $\gamma$. This is still a proper coloring since $\gamma \in F_{xx_1}$ implies that $\gamma \notin F_{xx_2}$ by choice of $\gamma$. Now, since the $(\alpha,\gamma)$-bichromatic path starting from $x$ ends at $y$, by Lemma~\ref{lem:UniqueMaxBichPath}, there is no new bichromatic cycle created indicating that the recoloring is valid. Observe that $\eta$ becomes a valid color for the edge $xy$ because by this recoloring we have eliminated the unique $xy$-critical path in $G$ that involves the color $\eta$, i.e., the $(\beta,\eta,xy)$-critical path in $G$ has been eliminated by this recoloring. Thus we can color the edge $xy$ with color $\eta$ and extend the coloring $g$ to a coloring $f$ of $G$ with $p$ colors. But this is a contradiction to the fact that $G$ is a minimum counterexample. Therefore, we can conclude that a minimum counterexample to Theorem~\ref{thm:ACIkdeg} does not exist which in turn implies the validity of Theorem~\ref{thm:ACIkdeg}.
	\end{proof}

	\section{Proof of Theorem~\ref{thm:ACI3deg}}\label{prf:ACI3degThm}
	\begin{proof}
		Let $G$ be the given $3$-degenerate graph with $n$ vertices, $m$ edges and maximum degree $\Delta$. We use induction on the number of edges $m$ of $G$ to proceed with the proof. Let $xy$ be an edge in $G$ such that $deg(x) \le 3$ and at most 3 neighbors of $y$ have their degree strictly greater than 3. The existence of such an edge $xy$ is guaranteed by Lemma~\ref{lem:SpecialEdgekdeg}. Further, we choose $x$ as the neighbor of $y$ that has the minimum degree among the vertices in $N(y)$. Let $G' =  G \setminus xy$, i.e., a graph formed by deleting the edge $xy$ from $G$. Observe that $G'$ is also $3$-degenerate and has less than $m$ edges. Further, we have $\Delta(G') \le \Delta(G)$. Hence, by induction, we have an acyclic edge coloring $g$ of $G'$ with $\Delta+5$ colors. Let $N'(y)$ be the set of all neighbors of $y$ in $G'$ having their degree less than or equal to 3 and let $N''(y)$ be the set of all neighbors of $y$ in $G'$ having their degree strictly greater than 3. Notice that we have $|N''(y)| \le 3$. Let $S$ be the set of colors in $F_y$ excluding those which belong to the set $\{g(yz) \mid z \in N''(y)\}$. Since $|N''(y)| \le 3$, we have $|F_y \setminus S| \le 3$.
		
		Now, we try to extend $g$ to an acyclic edge coloring $f$ of $G$ by assigning a color to the edge $xy$ from the available $\Delta+5$ colors. If $deg_G(x) = 1$, then by assigning the edge $xy$ any color other than the colors in $F_{xy}$, we can extend $g$ to the required coloring $f$ of $G$, since $|F_{xy}| \le \Delta - 1$. Thus we can assume that $deg(x) \ge 2$. Further, depending on the degree of the vertex $x$ in $G$, we have the following cases:
		
		\begin{case}\label{case:DegX2}
			$deg(x)=2$.
		\end{case}
		
		Let $x'$ be the unique neighbor of $x$ in $G'$. Let $g(xx') = \alpha$. If $\alpha \notin F_y$, then we can assign any color satisfying the proper coloring to the edge $xy$ and extend $g$ to the required coloring $f$ of $G$. Thus we can assume that $\alpha \in F_y$. Let $y'$ be the neighbor of $y$ such that $g(yy') = \alpha$. Observe that the candidate colors which are not valid for the edge $xy$ are precisely the colors in $F_{yy'}$. Further, since $\alpha \in F_{xy}$, the colors which are not candidate colors for the edge $xy$ are the colors in $F_{xy}$. Therefore, any color that is not in $F_{xy} \cup F_{yy'}$ is valid for the edge $xy$. Depending on whether $\alpha \in S$ or not, we have the following cases:
		
		\begin{subcase}\label{case:AlphaInSDegX2}
			$\alpha \in S$.
		\end{subcase}
		
		Since $\alpha \in S$, we have $deg(y') \le 3$ which implies that $|F_{yy'}| \le 2$. Therefore, we have $|F_{xy} \cup F_{yy'}| \le \Delta + 1$. We still have 4 colors available for the edge $xy$ and by using any one of those 4 colors, we can extend $g$ to the required coloring $f$ of $G$.
		
		\begin{figure}[h]
			\centering
			\begin{tikzpicture}
				\begin{scope}[every node/.style={circle,draw,inner sep=0pt, minimum size=5ex}]
					\node (x) at (1,0) {$x$};
					\node (y) at (4,0) {$y$};
					\node (x') at (-2,0) {$x'$};
				\end{scope}
				\node (y1) at (5,2) {};
				\node (y3) at (5.5,1.5) {};
				\node (y2) at (6,1) {};
				\node (y4) at (6.5,0) {};
				\node (y5) at (6.25,-0.5) {};
				\node (y6) at (5,-2) {};
				\node (y8) at (5.125,-0.25) {};
				\node (y9) at (5.5,0) {};
				\node (y10) at (4.7,-1.25) {};
				\node (y7) at (4.5,-1) {};
				\draw (x) to (y);
				\draw (x) to node[below]{$\alpha (\rightarrow \beta)$} (x');
				\draw (y) to node[above]{$\alpha$} (y1);
				\draw (y) to (y2);
				\draw (y) to (y3);
				\draw (y) to (y4);
				\draw (y) to (y6);
				\draw (y) to (y5);
				\draw [dotted] (y8) to (y7);
				\draw [dashed, , bend left] (y1) to node[right]{$N''(y)$} (y2);
				\draw [dashed, , bend left] (y4) to node[right]{$N'(y)$} (y6);
				\draw [dashed, , bend left] (y9) to node[right]{$S$} (y10);
			\end{tikzpicture}
			\caption{Neighborhood of the edge $xy$ in $G$ in Case~\ref{case:AlphaNotInSDegX2}}
			\label{fig:DegX2}
		\end{figure}
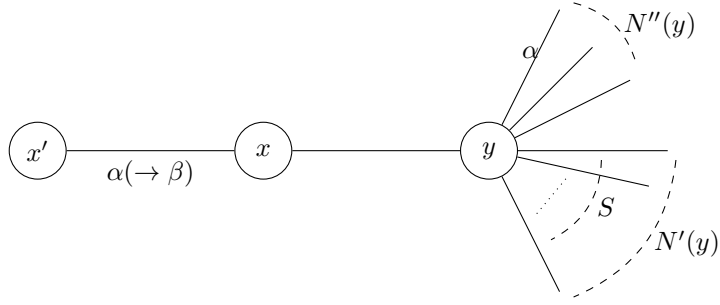
		
		\begin{subcase}\label{case:AlphaNotInSDegX2}
			$\alpha \notin S$.
		\end{subcase}
		
		Recall that we have $|F_{xx'}| \le \Delta - 1$ and $|F_y \setminus S| \le 3$. Therefore, we can infer the following:
		\begin{equation*}
			|F_{xx'} \cup (F_y \setminus S)| \le \Delta + 2
		\end{equation*}
		Now, we pick a color $\beta$ that is not present in the set $F_{xx'} \cup (F_y \setminus S)$ and recolor the edge $xx'$ from $\alpha$ to $\beta$. The recoloring is proper since $\beta \notin F_{xx'}$. The recoloring is valid because the edge $xy$ is not yet colored. Observe that if $\beta \notin F_y$, then we can assign any color satisfying the proper coloring to the edge $xy$ and extend $g$ to the required coloring $f$ of $G$. Thus we can assume that $\beta \in F_y$. Further, since $\beta$ was picked satisfying $\beta \notin F_{xx'} \cup (F_y \setminus S)$, we can conclude that $\beta \in S$. This boils down to Case~\ref{case:AlphaInSDegX2} and hence, we are done.
		
		\begin{case}\label{case:DegX3}
			$deg(x)=3$.
		\end{case}
		
		Notice that for this case, any neighbor of $y$ which is not in $N''(y)$ has the degree exactly $3$ by the choice of the vertex $x$. Let $x_1$ and $x_2$ be the neighbors of $x$ in $G$ other than $y$. Let $g(xx_1) = \alpha$ and let $g(xx_2) = \beta$. We define $R$ to be the set of all colors from the total available $\Delta + 5$ colors that are not in $F_{xy} \cup F_{yx}$. If any color in $R$ is valid for the edge $xy$, then we can extend $g$ to a coloring $f$ of $G$, as desired. Thus we can assume that no color in $R$ is valid for the edge $xy$. Depending on whether the colors $g(xx_1)$ and $g(xx_2)$ belongs to $F_{xy} \setminus S$, we have the following cases:
		
		\begin{subcase}\label{case:NoColorInFxyMinusS}
			$\{g(xx_1),g(xx_2)\} \cap (F_{xy} \setminus S) = \emptyset$.
		\end{subcase}
		
		Recall that $g(xx_1) = \alpha$ and let $g(xx_2) = \beta$. For this case, no color in $\{\alpha,\beta\}$ is in $F_{xy} \setminus S$, implying that every color in $\{\alpha,\beta\}$ is either present in $S$ or not present in $F_{xy}$.
		
		If no color in $\{\alpha,\beta\}$ is in $S$, then it implies that no color in $\{\alpha,\beta\}$ is in $F_{xy}$. Hence, we can use any color satisfying the proper coloring for the edge $xy$ and extend $g$ to the required coloring $f$ of $G$.
		
		Otherwise, let exactly one color in $\{\alpha,\beta\}$ be in $S$. Without loss of generality, let $\alpha \in S$ which implies $\alpha \in F_{xy}$. Let $y_m$ be the neighbor of $y$ such that $g(yy_m) = \alpha$. Note that $y_m \in N'(y)$. Therefore, we have $|F_{yy_m}| \le 2$. Further, one can see that the set of candidate colors that are not valid for the edge $xy$ is given by $F_{yy_m}$ for this case. Since $\alpha \in F_{xy}$, we have $|F_{xy} \cup F_{yx}| \le \Delta$, which implies that $|F_{xy} \cup F_{yx} \cup F_{yy_m}| \le \Delta + 2$.
		
		Otherwise, let both the colors in $\{\alpha,\beta\}$ be in $S$. Hence, $\alpha \in S$ and $\beta \in S$ which implies that $\alpha \in F_{xy}$ and $\beta \in F_{xy}$. Let $y_m$ and $y_n$ be the neighbors of $y$ such that $g(yy_m) = \alpha$ and $g(yy_n) = \beta$. Note that $y_m \in N'(y)$ and $y_n \in N'(y)$. Therefore, we have $|F_{yy_m} \cup F_{yy_n}| \le 4$. Further, one can see that the set of candidate colors that are not valid for the edge $xy$ is given by $F_{yy_m} \cup F_{yy_n}$ for this case. Since both $\alpha$ and $\beta$ are in $F_{xy}$, we have $|F_{xy} \cup F_{yx}| \le \Delta-1$, which implies the following:
		\begin{align*}
			|F_{xy} \cup F_{yx} \cup F_{yy_m} \cup F_{yy_n}| & \le \Delta - 1 + 2 + 2 \\
			& = \Delta + 3
		\end{align*}
		Since we have a total of $\Delta + 5$ colors, we have a valid color $\gamma$ for the edge $xy$ in any case, irrespective of the number of common colors in $\{\alpha,\beta\}$ and $S$. By assigning $\gamma$ to $xy$, we can extend $g$ to the required coloring $f$ of $G$.
		
		\begin{subcase}\label{case:SomeColorInFxyMinusS}
			$\{g(xx_1),g(xx_2)\} \cap (F_{xy} \setminus S) \neq \emptyset$.
		\end{subcase}
		
		In this case, at least one color in $\{\alpha,\beta\}$ is in $F_{xy} \setminus S$. Now, we define a color in $S$ to be \emph{freeable} with respect to the edge $xy$ as follows.
		
		\begin{definition}
			For any vertex $y'$ in $N'(y)$, the color $g(yy')$ in $S$ is said to be freeable if we can recolor $g(yy')$ with a color in $R$ without forming any new bichromatic cycle.
		\end{definition}
		
		Observe that after this recoloring, $g(yy')$ becomes a candidate color for the edge $xy$ in $G$. Now, we make the following claim regarding the number of freeable colors in $S$.
		
		\begin{claim}\label{clm:2FreeableColors}
			There exists at most 2 colors in $S$ which are not freeable.
		\end{claim}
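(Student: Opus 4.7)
The plan is to fix an arbitrary $\delta = g(yy') \in S$ (so $y' \in N'(y)$) and bound the ``bad'' $\rho \in R$, i.e., those for which the recoloring $yy' \mapsto \rho$ would break properness or create a bichromatic cycle. Because $x$ was chosen to minimize the degree among $N(y)$ and $\deg(x)=3$ in Case~\ref{case:DegX3}, every $y' \in N'(y)$ must satisfy $\deg(y')=3$, so $y'$ has exactly two other neighbors $z_1, z_2$ in $G'$, and $F_{y'}\setminus\{\delta\} = \{g(y'z_1), g(y'z_2)\}$ has exactly two elements.

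Next I would split the bad $\rho \in R$ into two kinds. Type~(i): properness breaks at $y'$, which by $R \cap F_y = \emptyset$ is the only kind of properness failure that can arise, and it forces $\rho \in \{g(y'z_1), g(y'z_2)\}$. Type~(ii): a new bichromatic cycle is created, which by Lemma~\ref{lem:ColorValidity} requires a $(\tau,\rho,yy')$-critical path for some $\tau \in F_{yy'} \cap F_{y'y} = (F_y \cap F_{y'}) \setminus \{\delta\}$, and by Lemma~\ref{lem:UniqueMaxBichPath} each such $\tau$ pins down at most one such $\rho$. Type-(ii) $\rho$'s are candidate colors, hence outside $F_{y'}$, so the two types are disjoint inside $R$.

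Then I would count. Let $a = |\{g(y'z_1), g(y'z_2)\} \cap R|$ (bounding the type-(i) obstructions in $R$) and let $b = |\{g(y'z_1), g(y'z_2)\} \cap F_y|$ (bounding the number of eligible $\tau$'s, hence type-(ii) obstructions). Since $R$ and $F_y$ are disjoint subsets of $C$, each of the two colors $g(y'z_i)$ contributes to at most one of $a, b$, giving $a + b \le 2$. Meanwhile $|F_x| = 2$ (as $\deg(x)=3$) and $|F_y| \le \Delta - 1$, so $|R| \ge (\Delta+5) - (\Delta+1) = 4$, which leaves at least $|R| - (a+b) \ge 2$ freeable choices of $\rho$ for $\delta$. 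Since the argument applies to every $\delta \in S$, in fact no color in $S$ is non-freeable, giving the claim with room to spare.

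The main obstacle of this plan is the bookkeeping step $a + b \le 2$: one must carefully check that the two-element set $\{g(y'z_1), g(y'z_2)\}$ partitions cleanly across $R$, $F_y$, and $F_x \setminus F_y$, and that the type-(ii) $\rho$'s returned by Lemma~\ref{lem:UniqueMaxBichPath} do not secretly land inside $F_{y'}$ and get double-counted. Everything else — exploiting the choice of $x$ to force $\deg(y') = 3$, and the bound $|R| \ge 4$ — is routine once the two-failure classification is in place.
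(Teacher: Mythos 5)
There is a genuine gap, and it sits exactly where you flagged the ``main obstacle'': the assertion that, for a fixed obstruction color $\tau \in F_{yy'} \cap F_{y'y}$, Lemma~\ref{lem:UniqueMaxBichPath} ``pins down at most one such $\rho$''. That lemma only says that for a \emph{fixed pair} $(\tau,\rho)$ there is at most one maximal $(\tau,\rho)$-bichromatic path through a given vertex; it says nothing about how many different $\rho \in R$ can admit a $(\tau,\rho,yy')$-critical path for the \emph{same} $\tau$. In fact many can: such a path must end at $y'$ along the unique edge $y'z_i$ with $g(y'z_i)=\tau$, and its penultimate edge is a $\rho$-colored edge at $z_i$, so up to $\deg(z_i)-1$ distinct colors $\rho$ can be blocked by the single color $\tau$, and $z_i$ may have degree as large as $\Delta$. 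Consequently your bound $a+b\le 2$ on the number of bad $\rho$'s collapses, and with it the conclusion $|R|-(a+b)\ge 2$. A further warning sign: your argument, if valid, would show that \emph{no} color of $S$ is non-freeable, i.e.\ $S'=\emptyset$, which would render the paper's subsequent bookkeeping with $S'$ pointless; the claim really is ``at most $2$'' and cannot be obtained by a one-obstruction-per-$\tau$ count.

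The paper's proof is built precisely around the phenomenon your count misses. It assumes three non-freeable colors $\gamma_1,\gamma_2,\gamma_3$ on edges $yy_1,yy_2,yy_3$ and, for each $i$, applies pigeonhole over five colors of $R$ to extract a single color $\nu_i\in F_{y_iy}$ that blocks at least \emph{three} of them; the fact that one $\nu_i$ blocks three $\mu_j$'s forces the $\nu_i$-colored neighbor of $y$ to have degree at least $4$, hence $\nu_i\in F_{xy}\setminus S$, a set of size at most $3$. Subclaim~\ref{subclm:NuIsAreDistinct} (another pigeonhole plus Lemma~\ref{lem:UniqueMaxBichPath}) shows $\nu_1,\nu_2,\nu_3$ are distinct, so one of them must coincide with $\alpha$ (or $\beta$), and a final pigeonhole pits the three $(\alpha,\mu_j,yy_3)$-critical paths against the three $(\alpha,\mu_j,xy)$-critical paths to contradict Lemma~\ref{lem:UniqueMaxBichPath}. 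Any repair of your approach needs this kind of global argument coupling the obstructions at the $y_i$'s with the obstructions at the edge $xy$, rather than a local per-color count.
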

		
		\begin{proof}
			By way of contradiction, assume that there exist at least 3 colors in $S$ which are not freeable. Let those colors be $\gamma_1$, $\gamma_2$ and $\gamma_3$ such that $g(yy_1) = \gamma_1$, $g(yy_2) = \gamma_2$ and $g(yy_3) = \gamma_3$ for $y_1,y_2,y_3 \in N'(y)$. Throughout the proof of the claim, whenever we use $i$, we implicitly assume that for any $i$ with $1 \le i \le 3$. Since $\gamma_i \in S$, we have $y_i \notin N''(y)$. This implies that $deg(y_i)=3$. Let $y'_i$ and $y''_i$ be the neighbors of $y_i$ other than $y$ and let $g(y_iy'_i) = \nu_i$ and $g(y_iy''_i) = \eta_i$.
			
			If exactly one among $\alpha$ or $\beta$ is in $F_{xy}$, then we have $|F_{xy} \cup F_{yx}| \le \Delta$, which implies that $|R| \geq (\Delta + 5) - (\Delta) = 5$. Otherwise, if both $\alpha$ and $\beta$ are in $F_{xy}$, then we have $|F_{xy} \cup F_{yx}| \le \Delta - 1$, which implies that $|R| \geq (\Delta + 5) - (\Delta - 1) = 6$. Since we have assumed that at least one among $\alpha$ or $\beta$ belongs to $F_{xy}$, in any case, we have that $|R| \geq 5$. Let $\{\mu_1,\mu_2,\mu_3,\mu_4,\mu_5\}$ be any five colors in $R$.
			
			Since $g(yy_i) = \gamma_i$ is not freeable, it means that if we recolor $g(yy_i)$ with any color in $R$, a new bichromatic cycle will be formed. Therefore, we have that for every $\mu_j \in R$, either a $(\nu_i,\mu_j,yy_i)$-critical path exists or a $(\eta_i,\mu_j,yy_i)$-critical path exists or both the above critical paths exist in $G'$. Therefore, at least three out of five $yy_i$-critical paths involve $\nu_i$ or at least three out of five $yy_i$-critical paths involve $\eta_i$. Recall that the statement is true for any $i$ with $1 \le i \le 3$. Hence, without loss of generality, assume that at least three $yy_1$-critical paths involve $\nu_1$, at least three $yy_2$-critical paths involve $\nu_2$ and at least three $yy_3$-critical paths involve $\nu_3$. Observe that at least three $yy_i$-critical paths that involve $\nu_i$ should reach $y$ through a vertex $z_i \in N(y)$ with $deg(z_i) \ge 4$ which implies that $z_i \in N''(y)$ and $\nu_i \in F_{xy} \setminus S$.
			
			\begin{subclaim}\label{subclm:NuIsAreDistinct}
				The colors $\nu_1$, $\nu_2$ and $\nu_3$ are all distinct.
			\end{subclaim}
			
			\begin{proof}
				By way of contradiction, without loss of generality assume that $\nu_1 = \nu_2 = \nu$ for some color $\nu$. Let $y' \in N''(y)$ with $g(yy') = \nu$. Then there exist at least three $(\nu,\mu_j,yy_1)$-critical paths and at least three $(\nu,\mu_k,yy_2)$-critical paths for $\{\mu_1,\mu_2,\mu_3,\mu_4,\mu_5\}$ in $R$. Notice that we have five colors and at least six critical paths under consideration. Hence, we have a color $\mu_j$ in $R$ such that there exists a $(\nu,\mu_j,yy_1)$-critical path and a $(\nu,\mu_j,yy_2)$-critical path. This implies that there exists a $(\nu,\mu_j)$-maximal bichromatic path starting from the vertex $y$ ending at the vertex $y_1$ and there exists a $(\nu,\mu_j)$-maximal bichromatic path starting from the vertex $y$ ending at the vertex $y_2$, a contradiction to Lemma~\ref{lem:UniqueMaxBichPath}. Therefore, our assumption that $\nu_1 = \nu_2 = \nu$ is wrong and the subclaim holds.
			\end{proof}
			
			Since any color $\mu_j$ in $R$ with $1 \le j \le 5$, is not valid for the edge $xy$, there exists either an $(\alpha,\mu_j,xy)$-critical path or a $(\beta,\mu_j,xy)$-critical path or both in $G'$. Hence, there exist at least three $(\alpha,\mu_j,xy)$-critical paths or at least three $(\beta,\mu_j,xy)$-critical paths. Without loss of generality, assume the existence of at least three $(\alpha,\mu_j,xy)$-critical paths. Now, recall that we have $\nu_i \in F_{xy} \setminus S$. Since $|F_{xy} \setminus S| \le 3$, Subclaim~\ref{subclm:NuIsAreDistinct} implies that $\alpha$ is a color in $\{\nu_1, \nu_2, \nu_3\}$. Without loss of generality, let $\alpha = \nu_3$. Then there exists at least three $(\alpha,\mu_j,yy_3)$-critical paths together with the already assumed at least three $(\alpha,\mu_j,xy)$-critical paths for $\{\mu_1,\mu_2,\mu_3,\mu_4,\mu_5\}$ in $R$. Notice that we have five colors and at least six critical paths under consideration. Hence, we have a color $\mu_j$ in $R$ such that there exists an $(\alpha,\mu_j,yy_3)$-critical path and an $(\alpha,\mu_j,xy)$-critical path. This implies that there exists an $(\alpha,\mu_j)$-maximal bichromatic path starting from the vertex $y$ ending at the vertex $y_3$ and there exists an $(\alpha,\mu_j)$-maximal bichromatic path starting from the vertex $y$ ending at the vertex $x$, a contradiction to Lemma~\ref{lem:UniqueMaxBichPath}. Hence, our assumption that there exist at least 3 colors in $S$ which are not freeable is wrong and the claim holds.
		\end{proof}
		
		Let $S' \subset S$ be the set of all colors in $S$ which are not freeable. Now, we define the set $T$ to be $T = R \cup (S \setminus S')$. Further, we make the following claim regarding the cardinality of the set $T$.
		
		\begin{claim}\label{clm:AtLeastDeltaMinusOneCandidateColors}
			$|T| \geq \Delta - 1$.
		\end{claim}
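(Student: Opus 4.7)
The plan is to decompose $|T|$ into a small number of easy-to-count pieces and then apply three bounds that have already been established. The central observation is that $R$ and $S$ are disjoint: by definition $R$ avoids $F_{xy}$, whereas $S\subseteq F_{xy}$. Combining this with $S'\subseteq S$ gives the clean identity $|T|=|R|+|S|-|S'|$, which I would use as the template for the rest of the argument.

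Next, I would compute the two sizes. For $|S|$, since $g$ is a proper edge coloring, the map $z\mapsto g(yz)$ is a bijection from $N_{G'}(y)=N'(y)\cup N''(y)$ onto $F_{xy}$, and its restriction to $N''(y)$ lands exactly on $F_{xy}\setminus S$; hence $|S|=|N'(y)|$. For $|R|$, the key is that $\deg_{G'}(x)=2$ forces $F_{yx}=\{\alpha,\beta\}$, so $|F_{xy}\cup F_{yx}|=|F_{xy}|+2-a$, where $a:=|\{\alpha,\beta\}\cap F_{xy}|$; substituting $|F_{xy}|=|N'(y)|+|N''(y)|$ then gives an explicit expression for $|R|$. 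Adding the two sizes makes the $|N'(y)|$ terms cancel, leaving $|T|=\Delta+3+a-|N''(y)|-|S'|$.

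All that remains is to plug in three bounds: $a\ge 1$ is the defining condition of Subcase \ref{case:SomeColorInFxyMinusS}, since at least one of $\alpha,\beta$ lies in $F_{xy}\setminus S\subseteq F_{xy}$; $|N''(y)|\le 3$ comes from the choice of $xy$ via Lemma \ref{lem:SpecialEdgekdeg}; and $|S'|\le 2$ is exactly Claim \ref{clm:2FreeableColors}. This yields $|T|\ge\Delta+3+1-3-2=\Delta-1$, with no slack in the worst case. The proof is essentially bookkeeping, so the only potential pitfall is carelessness about the disjointness of $R$ and $S$ or about double-counting the colors in $F_{xy}\cap F_{yx}$; introducing the parameter $a$ cleanly is what prevents the calculation from becoming messy.
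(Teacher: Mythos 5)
Your proof is correct and follows essentially the same counting as the paper: both arguments come down to the three bounds $|S'|\le 2$ (Claim~\ref{clm:2FreeableColors}), $|N''(y)|\le 3$ (equivalently $|F_{xy}\setminus S|\le 3$), and the Subcase~\ref{case:SomeColorInFxyMinusS} hypothesis that at least one of $\alpha,\beta$ lies in $F_{xy}\setminus S$, yielding $\Delta+5-6=\Delta-1$. The only cosmetic difference is that you derive the exact identity $|T|=|R|+|S|-|S'|$ and expand, whereas the paper bounds the complement $F_{yx}\cup(F_{xy}\setminus S)\cup S'$ directly by $1+3+2$.
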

		
		\begin{proof}
			Observe that the set $T$ is precisely the set of all colors that are not in $F_{yx} \cup (F_{xy} \setminus S) \cup S'$. By Claim~\ref{clm:2FreeableColors}, we have that $|S'| \le 2$. We also have $|F_{xy} \setminus S| \le 3$. Since $deg(x) = 3$, we have $|F_{yx}| = 2$. Precisely, $F_{yx} = \{\alpha,\beta\}$. But we have already assumed that at least one among $\alpha$ or $\beta$ belongs to $F_{xy} \setminus S$. Therefore, there exists at most one color in $F_{yx} = \{\alpha,\beta\}$ which is not in $F_{xy} \setminus S$. With all these observations we can infer the following:
			\begin{align*}
				|T| & = \Delta + 5 - |F_{yx} \cup (F_{xy} \setminus S) \cup S'| \\
				& \geq \Delta + 5 - (1 + 3 + 2) \\
				& = \Delta - 1
			\end{align*}
			Thus we have the lower bound for the set $T$, as claimed.
		\end{proof}
		
		Further, depending on how many colors among $\{g(xx_1),g(xx_2)\}$ belong to the set $F_{xy}$, we have the following cases:
		
		\begin{subsubcase}\label{case:OneColorInFxy}
			Exactly one color in $\{g(xx_1),g(xx_2)\}$ belongs to $F_{xy}$.
		\end{subsubcase}
		
		Recall that we have $g(xx_1)=\alpha$ and $g(xx_2)=\beta$. Without loss of generality, let $\alpha \in F_{xy}$ and $\beta \notin F_{xy}$. Let $y_1$ be the neighbor of $y$ in $G$ such that $g(yy_1) = \alpha$. Since we already have that at least one among $\alpha$ or $\beta$ belongs to $F_{xy} \setminus S$, $\beta$ not being present in $F_{xy}$ will imply that $\alpha \notin S$. Collectively, we can infer that $\alpha \in (F_{xy} \setminus S)$. Observe that since $\alpha \in F_{xy}$, we have that $|F_{xy} \cup F_{yx}| \le \Delta$ implying that $|R| \geq 5$. Let $\{\mu_1,\mu_2,\mu_3,\mu_4,\mu_5\} \in R$. If any color in $R$ is valid for the edge $xy$, then we are done. Hence, we can assume that every candidate color for the edge $xy$ in $R$ is not valid. This implies that for every color $\mu_i$ in $R$, there exists an $(\alpha,\mu_i,xy)$-critical path in $G'$ with respect to $g$.
		
		Now, by Claim~\ref{clm:AtLeastDeltaMinusOneCandidateColors}, we have $|T| \geq \Delta - 1$. If there exists a color $\zeta \in T$ such that there is no $(\alpha,\zeta,xy)$-critical path in $G'$ with respect to $g$, then we can free the color $\zeta$ if necessary and assign $\zeta$ to the edge $xy$ and thereby extend $g$ to the required coloring $f$ of $G$. Therefore, we can assume that for every color $\zeta \in T$, there exists an $(\alpha,\zeta,xy)$-critical path in $G'$ with respect to $g$.
		
		Let us assume that $\beta \in F_{xx_1}$. Note that $|F_{xx_1}| \le \Delta - 1$. Since $\beta \in F_{yx}$, we have $\beta \notin T$. This together with the assumption that $\beta \in F_{xx_1}$ implies that there exists a color $\eta$ such that $\eta \in T$ but $\eta \notin F_{xx_1}$. This implies that there can not be any $(\alpha,\eta,xy)$-critical path in $G'$ with respect to $g$. Since $\eta \in T$, this is a contradiction to our previous assumption that for every color $\zeta \in T$, there exists an $(\alpha,\zeta,xy)$-critical path in $G'$ with respect to $g$. Hence, our assumption that $\beta \in F_{xx_1}$ is not true, which implies that we are good to conclude that $\beta \notin F_{xx_1}$.
		
		Since $|F_{xx_1} \cup \{\beta\}| \le \Delta$ and $|F_{xy} \setminus S| \le 3$, we are sure that there exists a color $\gamma$ such that $\gamma \notin F_{xx_1} \cup \{\beta\} \cup (F_{xy} \setminus S)$. Now, we recolor the edge $xx_1$ with $\gamma$. This recoloring is valid since $\beta \notin F_{xx_1}$. If $\gamma \notin S$, then clearly $\gamma \notin F_{xy}$ which implies that by assigning any color to the edge $xy$ which satisfies proper coloring, we can extend $g$ to the required coloring $f$ of $G$. Otherwise, if $\gamma \in S$, then since $\beta \notin F_{xy}$, by Case~\ref{case:NoColorInFxyMinusS}, we are done.
		
		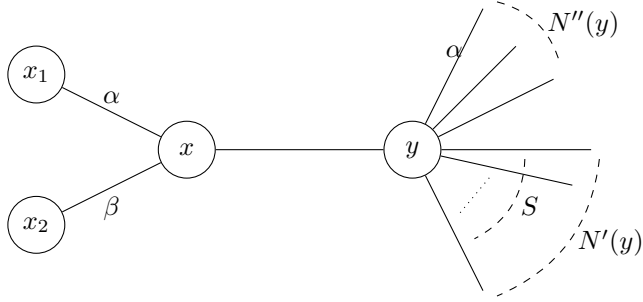
\begin{figure}[h]
			\centering
			\begin{tikzpicture}
				\begin{scope}[every node/.style={circle,draw,inner sep=0pt, minimum size=5ex}]
					\node (x) at (1,0) {$x$};
					\node (y) at (4,0) {$y$};
					\node (x1) at (-1,1) {$x_1$};
					\node (x2) at (-1,-1) {$x_2$};
				\end{scope}
				\node (y1) at (5,2) {};
				\node (y3) at (5.5,1.5) {};
				\node (y2) at (6,1) {};
				\node (y4) at (6.5,0) {};
				\node (y5) at (6.25,-0.5) {};
				\node (y6) at (5,-2) {};
				\node (y8) at (5.125,-0.25) {};
				\node (y9) at (5.5,0) {};
				\node (y10) at (4.7,-1.25) {};
				\node (y7) at (4.5,-1) {};
				\draw (x) to (y);
				\draw (x) to node[above]{$\alpha$} (x1);
				\draw (x) to node[below]{$\beta$} (x2);
				\draw (y) to node[above]{$\alpha$} (y1);
				\draw (y) to (y2);
				\draw (y) to (y3);
				\draw (y) to (y4);
				\draw (y) to (y6);
				\draw (y) to (y5);
				\draw [dotted] (y8) to (y7);
				\draw [dashed, , bend left] (y1) to node[right]{$N''(y)$} (y2);
				\draw [dashed, , bend left] (y4) to node[right]{$N'(y)$} (y6);
				\draw [dashed, , bend left] (y9) to node[right]{$S$} (y10);
			\end{tikzpicture}
			\caption{Neighborhood of the edge $xy$ in $G$ in Case~\ref{case:OneColorInFxy}}
			\label{fig:DegX3}
		\end{figure}
		
		\begin{subsubcase}
			Both the colors in $\{g(xx_1),g(xx_2)\}$ belong to $F_{xy}$.
		\end{subsubcase}
		
		Recall that we have $g(xx_1)=\alpha$ and $g(xx_2)=\beta$. Let $y_1$ and $y_2$ be the neighbors of $y$ such that $g(yy_1) = \alpha$ and $g(yy_2) = \beta$. Since the colors $\alpha$ and $\beta$ are seen at both the vertices $x$ and $y$ in $G'$, we have $|F_{xy} \cup F_{yx}| \le \Delta - 1$, implying that $|R| \geq 6$  for this case. Let $\{\mu_1,\mu_2,\mu_3,\mu_4,\mu_5,\mu_6\} \in R$. If any color in $R$ is valid for the edge $xy$, then we are done. Hence, we can assume that every candidate color for the edge $xy$ in $R$ is not valid. This implies that for every color $\mu_i$ in $R$, there exists either an $(\alpha,\mu_i,xy)$-critical path or a $(\beta,\mu_i,xy)$-critical path in $G'$ with respect to $g$ or both. Further, if there exists a color $\zeta \in T$ such that there is no $(\alpha,\zeta,xy)$-critical path and there is no $(\beta,\zeta,xy)$-critical path in $G'$ with respect to $g$, then we can free the color $\zeta$ if necessary and assign $\zeta$ to the edge $xy$ and thereby extend $g$ to the required coloring $f$ of $G$. Hence, we can also assume that for every color $\zeta \in T$, there exists either an $(\alpha,\zeta,xy)$-critical path or a $(\beta,\zeta,xy)$-critical path in $G'$ with respect to $g$ or both.
		
		Since we already have that at least one among $\alpha$ or $\beta$ belongs to $F_{xy} \setminus S$,  we are sure that at most one color in the set $\{\alpha,\beta\}$ is in $S$. This also implies that at least one color in the set $\{\alpha,\beta\}$ is not in $S$. Without loss of generality, assume that $\beta \notin S$. Then since $\beta \in F_{xy}$, we can infer that $\beta \in F_{xy} \setminus S$.
		
		Let us assume that $\beta \in F_{xx_1}$. Note that $|F_{xx_1}| \le \Delta - 1$. Since $\beta \in F_x$, we have $\beta \notin T$. This together with Claim~\ref{clm:AtLeastDeltaMinusOneCandidateColors} and the assumption that $\beta \in F_{xx_1}$ imply that there exists a color $\eta$ such that $\eta \in T$ but $\eta \notin F_{xx_1}$. Therefore, there can not be an $(\alpha,\eta,xy)$-critical path in $G'$ with respect to $g$ implying that there exists a $(\beta,\eta,xy)$-critical path, since $\eta \in T$. Hence, we can free the color $\eta$ and recolor the edge $xx_1$ with $\eta$ without forming any new bichromatic cycles, since the $(\beta,\eta)$-bichromatic path starting from the vertex $x$ can not reach $x_1$ because it ends at $y$. Now, since $\eta \notin F_{xy}$ and $\beta \in F_{xy}$, by Case~\ref{case:OneColorInFxy}, we are done.
		
		Now, assume that $\beta \notin F_{xx_1}$. Now, we have $|F_{xx_1} \cup \{\beta\}| \le \Delta$ and $|F_{xy} \setminus S| \le 3$. But since $\beta \in F_{xy} \setminus S$, we have $|F_{xx_1} \cup \{\beta\} \cup (F_{xy} \setminus S)| \le \Delta + 2$. Further, since $|S'| \le 2$, we have $|F_{xx_1} \cup \{\beta\} \cup (F_{xy} \setminus S) \cup S'| \le \Delta + 4$. Therefore, since we have a total of $\Delta + 5$ colors, we are sure that there exists a color $\gamma$ such that $\gamma \notin F_{xx_1} \cup \{\beta\} \cup (F_{xy} \setminus S) \cup S'$. Now, we free the color $\gamma$ and recolor the edge $xx_1$ with $\gamma$. This recoloring is valid since $\beta \notin F_{xx_1}$. Since $\gamma \notin F_{xy}$, and $\beta \in F_{xy}$, by Case~\ref{case:OneColorInFxy}, we are done.
		
		Therefore, in any case, we can extend the coloring $g$ of $G'$ to a coloring $f$ of $G$ with the same number of colors, which in turn confirms the validity of Theorem~\ref{thm:ACI3deg}.
	\end{proof}
	
	\section{Conclusion}
	We conclude our discussion on the acyclic chromatic index of degenerate graphs by reiterating Theorem~\ref{thm:ACIkdeg} and Theorem~\ref{thm:ACI3deg}. For any $k$-degenerate graph $G$, we have $a'(G) \le \lceil(\frac{k+1}{2})\Delta\rceil + 1$. Further, for any $3$-degenerate graph $G$, we have $a'(G) \le \Delta+5$. But the acyclic edge coloring conjecture gives an upper bound of $\Delta+2$ for any graph. Hence, one can take up the study of $3$-degenerate graphs and try to prove the conjecture for a $3$-degenerate graph. The same thing holds for a $k$-degenerate graph and one can try to improve the existing upper bound for the acyclic chromatic index of a $k$-degenerate graph which constitutes a nice research problem.
	
	\bibliographystyle{apa}
	\bibliography{Degenerate}
	
\end{document}